\documentclass[11pt]{amsart}
\usepackage[T1]{fontenc}
\usepackage{lmodern,microtype}

\makeatletter
\def\@seccntformat#1{%
  \protect\textup{\protect\@secnumfont
    \ifnum\pdfstrcmp{subsection}{#1}=0 \bfseries\fi
    \csname the#1\endcsname
    \protect\@secnumpunct
  }%
}  
\makeatother

\let\oldtocsection=\tocsection

\let\oldtocsubsection=\tocsubsection

\let\oldtocsubsubsection=\tocsubsubsection

\renewcommand{\tocsection}[2]{\hspace{0em}\oldtocsection{#1}{#2}}
\renewcommand{\tocsubsection}[2]{\hspace{1em}\oldtocsubsection{#1}{#2}}
\renewcommand{\tocsubsubsection}[2]{\hspace{2em}\oldtocsubsubsection{#1}{#2}}

\usepackage{mathtools}

\usepackage{amsmath, amsthm}
\usepackage{geometry}
\usepackage{color}                
\usepackage{graphicx}
\usepackage{tikz}
\usepackage{tikz-cd}
\usetikzlibrary{matrix,arrows,arrows.meta,decorations.pathmorphing,decorations.markings,calc}
\usepackage{amssymb}

\usepackage{scalerel}

\usepackage[hidelinks,hypertexnames=false]{hyperref}
\let\textlstroke\l
\let\textLstroke\L

\newcommand{\ssub}{\subset\joinrel\subset}

\newcommand{\be}{\begin{equation}}
\newcommand{\bea}{\begin{eqnarray}}
\newcommand{\eea}{\end{eqnarray}} \newcommand{\ee}{\end{equation}}

\def\ba{\begin{eqnarray}}
\def\ea{\end{eqnarray}}

\def\v{\vskip .1in}

\def\al{\alpha}
\def\b{\beta}

\def\d{\delta}
\def\e{\varepsilon}
\def\g{\gamma}
\def\l{\lambda}
\def\m{\mu}
\def\n{\nu}
\def\o{\omega}
\def\f{\varphi}
\def\r{\rho}

\def\t{\theta}
\def\z{\zeta}
\def\k{\kappa}

\def\D{\Delta}
\def\O{\Omega}
\def\T{\Theta}

\def\cK{{\mathcal K}}

\def\Q{{\mathbb Q}}
\def\R{{\mathbb R}}
\def\C{{\mathbb C}}
\def\P{{\mathbb P}}

\def\diam{{\rm diam}}

\def\PSH{{\rm PSH}}

\def\Ric{{\rm Ric}}
\def\reg{{\rm reg}}

\def\sing{{\rm sing}}

\def\vol{{\rm vol}}

\def\x{{\bf x}}
\def\y{{\bf y}}

\def\tr{{\rm tr}}

\def\ti\tilde
\def\u{\underline}

\def\pl{\partial}

\def\i{\infty}
\def\I{\int}
\def\p{\prod}
\def\s{\sum}

\def\ddb{\partial\bar\partial}
\def\sub{\subseteq}
\def\ra{\rightarrow}
\def\hra{\hookrightarrow}

\def\lra{\longrightarrow}

\def\L{\Lambda}

\def\ti{\tilde}

\def\[{{\bf [}}
\def\]{{\bf ]}}

\def\pl{\partial}
\def\sq{{{\sqrt{{\scalebox{0.75}[1.0]{\( - 1\)}}}}}\hskip .01in}

\newtheorem{theorem}{Theorem}
\newtheorem{corollary}{Corollary}[section]
\newtheorem{lemma}{Lemma}

\newtheorem{proposition}{Proposition}

\newcommand{\ddc}{\sq\,\ddb}
\newcommand{\euc}{\mathrm{euc}}
\newcommand{\FS}{\mathrm{FS}}
\allowdisplaybreaks[2]
\title{Geometric H\"older estimates for complex Monge-Amp\`ere equations}
\author{Bin Guo}
\address{Department of Mathematics \& Computer Science, Rutgers University, Newark, NJ 07102, USA}
\email{bguo@rutgers.edu}
\author{Jian Song}
\address{Department of Mathematics, Rutgers University, Piscataway, NJ 08854, USA}
\email{jiansong@math.rutgers.edu}
\author{Jacob Sturm}
\address{Department of Mathematics \& Computer Science, Rutgers University, Newark, NJ 07102, USA}
\email{sturm@newark.rutgers.edu}
\thanks{Research supported in part by the National Science Foundation under grants DMS-2203607 and DMS-2505575.}
\date{}
\hypersetup{
  pdftitle={Geometric Holder estimates for complex Monge-Ampere equations},
  pdfauthor={Bin Guo, Jian Song, Jacob Sturm}
}
\begin{document}

\begin{abstract}
{This paper is the continuation of our earlier work on H\"older estimates for K\"ahler potentials on compact normal K\"ahler spaces. We establish a geometric counterpart for analytic H\"older regularity of Kolodziej. If a singular  metric $\o_\phi$ has H\"older continuous potentials with respect to a smooth background metric, then its distance function is H\"older continuous with respect to a smooth background distance. The result holds on normal K\"ahler spaces and yields compactness of the metric completion as well as its identification with the underlying complex space if $\o_\phi$ is a K\"ahler current. Under this positivity assumption, our estimates establish the H\"older equivalence between the intrinsic canonical K\"ahler metrics and the extrinsic smooth metrics on K\"ahler--RCD spaces, particularly on smoothable K\"ahler--Einstein spaces.}
\end{abstract}
\maketitle

\section{Introduction}

Let $(M,\t)$ be a compact K\"ahler manifold equipped with a smooth K\"ahler metric $\t$. We consider the following complex Monge--Amp\`ere equation
$$
(\t+\ddc\f)^n=f\t^n,\qquad \sup_M\f=0,
$$
where $f\geq0$ and $\I_M f\t^n=\I_M\t^n$.

\v
Yau's solution to the Calabi conjecture [8] guarantees the existence and uniqueness of the solution when $f>0$ is smooth. Ko\textlstroke{}odziej's work [9,10] shows that an $L^p$ density with $p>1$ forces boundedness and then H\"older continuity of the solution. The $L^\i$ estimate extends to big semipositive classes [11], in particular to compact normal K\"ahler varieties. However, the corresponding H\"older estimate remains elusive in the genuinely degenerate setting. The recent work [2] overcomes this difficulty for smoothable projective varieties by means of quantitative Bergman approximations. It proves uniform H\"older estimates under a Ricci lower bound and a diameter upper bound, with applications to smoothable singular K\"ahler--Einstein spaces and to degenerate Monge--Amp\`ere equations.

\v
Let $X$ be a compact connected normal K\"ahler space. By a smooth K\"ahler metric $\t$ on $X$ we mean that $\t$ is locally the restriction of a smooth ambient K\"ahler metric via a local holomorphic embedding of $X$. Its restriction to $X^{\reg}$ determines a length metric $d_\t$. 
We define $C^\al(X)$ using Euclidean distances in local holomorphic
embeddings: a function $f$ belongs to $C^\al(X)$ if, locally,
$$
|f(x)-f(y)|\le C\,|\iota(x)-\iota(y)|^\al,
$$
where $\iota:U\hookrightarrow\C^N$ is such an embedding.
On compact $X$, different finite choices of embeddings give equivalent
H\"older norms.

\v
Our aim is to show that analytic H\"older regularity implies metric H\"older regularity. Starting from H\"older continuity of $\f$, we would like to prove H\"older continuity of the distance induced by
$$
\o=\t+\ddc\f
$$
relative to $d_\t$. On a smooth K\"ahler manifold, this  follows from the work of Y.~Li [4]. His proof uses the volume-doubling property of balls for the background
metric $\t$, together with a scale-invariant Poincar\'e inequality.
\v
On singular spaces, however, the required scale-invariant $(1,2)$-Poincar\'e inequality can fail for the intrinsic metric induced by a smooth ambient K\"ahler form. For example, this occurs on $\{(x,y,z)\in\C^3:xy=z^k\}$, $k\geq3$, with the restricted Euclidean metric; see [12].

\v
We bypass the difficulties at singular points by pushing the metric to a smooth base via a finite projection. After deriving the distance estimate downstairs, we lift it back while keeping track of the monodromy.

\v
Here is a typical application.

\begin{theorem}
Let $(X,\o_{\mathrm{KE}})$ be a klt K\"ahler--Einstein variety, let $\o_0$ be a smooth background metric, and let $d_{\mathrm{KE}}$ and $d_0$ be the corresponding distance functions. Assume $X$ is smoothable. Then there exists $C>0$ and $\al>0$ such that
$$
\frac1C\cdot d_0\le d_{\mathrm{KE}}\le C\cdot d_0^\al.
$$
\end{theorem}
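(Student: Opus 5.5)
The plan splits the statement into its two inequalities, which are of very different nature.

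\emph{Lower bound} $d_0\le C\,d_{\mathrm{KE}}$. I will use a Chern--Lu / Schwarz lemma argument. On $X^{\reg}$ the K\"ahler--Einstein metric has Ricci curvature bounded below, $\Ric(\o_{\mathrm{KE}})=\l\,\o_{\mathrm{KE}}$. The background $\o_0$ is the restriction of an ambient smooth metric under local embeddings, so its holomorphic bisectional curvature on $X^{\reg}$ is bounded above. This is the Gauss equation: curvature decreases on complex submanifolds.

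The Chern--Lu inequality applied to $\tr_{\o_{\mathrm{KE}}}\o_0$ gives $\tr_{\o_{\mathrm{KE}}}\o_0\le C$. To run the maximum principle on the noncompact $X^{\reg}$, I will use the smoothing: on the smooth fibers $X_t$, the Ricci-flat or KE metrics converge, and the Schwarz lemma holds uniformly there. Alternatively, I can add an $\e\log|s|^2$ barrier, which is legitimate because $\f_{\mathrm{KE}}$ is bounded by the $L^\i$ estimate of [11]. This gives $\o_0\le C\o_{\mathrm{KE}}$ on $X^{\reg}$.

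Next, since $X^{\sing}$ has real codimension $\ge 2$, any $\o_{\mathrm{KE}}$-almost-minimizing curve can be perturbed into $X^{\reg}$. For this I will use the convexity of the regular part for KE metrics, or the RCD structure. Hence $d_0\le C d_{\mathrm{KE}}$.

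\emph{Upper bound} $d_{\mathrm{KE}}\le C d_0^\al$. Here I will invoke the main theorem of the paper: if $\o_\f=\t+\ddc\f$ with $\f\in C^\al(X)$ and $\o_\f$ a K\"ahler current, then $d_{\o_\f}\le C d_\t^{\al'}$. So it suffices to check the hypotheses.

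First, the K\"ahler current property. It follows from the lower bound just proved, since $\o_{\mathrm{KE}}\ge C^{-1}\o_0$.

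Second, the H\"older continuity of the potential $\f_{\mathrm{KE}}$ with respect to $\o_0$. This is where smoothability enters, through the result of [2]. The smoothing $\mathcal X\to\Delta$ gives smooth fibers carrying KE metrics (or approximations) with Ricci bounded below and diameter bounded above. The quantitative Bergman approximations of [2] then yield uniform H\"older estimates of the potentials relative to the restriction of a fixed ambient metric. These pass to the limit on the central fiber $X$.

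With both hypotheses in hand, the main theorem gives $d_{\mathrm{KE}}\le C d_0^{\al}$.

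\emph{Main obstacle.} The main difficulty lies in the general H\"older distance estimate itself, which is the main theorem of the paper. For this corollary, the delicate point is arranging that the H\"older estimate of [2] is stated with respect to Euclidean distances in local embeddings, matching the paper's definition of $C^\al(X)$. This requires using the uniform Bergman embeddings of the family, so that $\o_0$ is comparable to the restricted Fubini--Study form.
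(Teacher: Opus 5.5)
Your upper bound follows the paper's argument. [2, Theorem 2] gives an ambient H\"older potential with $\o_{\mathrm{KE}}=\t+\ddc\f$, where $\t=\o_{\FS}|_X$ after rescaling. Theorem~\ref{thm:projective} then gives $d_{\mathrm{KE}}\le C d_\t^{\al/(2D)}$, and $\t$ is uniformly equivalent to $\o_0$. Theorem~\ref{thm:projective} itself does not need the K\"ahler current property. Positivity is used only for the lower bound and for identifying the completion with $X$ (Corollary~\ref{cor:completion}).

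Your lower bound takes a different route. The paper quotes the strict positivity theorem [6, Theorem 1.1], $\o_{\mathrm{KE}}\ge c\t$, and compares lengths. You rederive this positivity from the Chern--Lu inequality on the compact smooth fibers $X_t$, where the maximum principle is available. There the Gauss equation bounds the bisectional curvature of $\o_{\FS}|_{X_t}$ from above, and you pass $\o_t\ge C^{-1}\o_{\FS}|_{X_t}$ to the limit. This route is sound and uses smoothability for both inequalities, making you independent of [6]. It does need inputs you should state explicitly:
\begin{enumerate}
\item a uniform bound on $\mathrm{osc}_{X_t}\f_t$, required for the $-A\f_t$ term in the maximum principle (for instance from the uniform estimates of [2]);
\item a uniform Ricci lower bound for whichever fiber metrics you use;
\item identification of their limit with $\o_{\mathrm{KE}}$, up to automorphisms in the Fano case.
\end{enumerate}
The paper's citation is shorter and is not tied to the degeneration.

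Two parts of your proposal should be dropped. First, the alternative $\e\log|s|^2$ barrier is not legitimate. Boundedness of $\f_{\mathrm{KE}}$ controls the $-A\f$ term, but it says nothing about how fast $\tr_{\o_{\mathrm{KE}}}\o_0$ can blow up along $X^{\sing}$. So nothing forces $\log\tr_{\o_{\mathrm{KE}}}\o_0-A\f+\e\log|s|^2$ to attain its maximum on $X^{\reg}$. If boundedness of $\f$ were enough, the same argument would give strict positivity for every klt K\"ahler--Einstein metric with no further work, which is exactly the nontrivial theorem the paper imports from [6]. Second, the curve-perturbation step via convexity or RCD structure is unnecessary. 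Both $d_0$ and $d_{\mathrm{KE}}$ are defined as infima over curves in $X^{\reg}$, so $\o_{\mathrm{KE}}\ge c\,\o_0$ gives $\sqrt c\,d_0\le d_{\mathrm{KE}}$ there directly. Corollary~\ref{cor:completion} then extends both bounds to all of $X$.
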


Here we use the polarized projective $\Q{}$-Gorenstein smoothability convention which can be found in [2, Definitions 1--2].

\v
In the case of smooth metrics on smooth manifolds, the fact that H\"older continuity of
potentials implies H\"older equivalence of distance functions is the content of the following
theorem of Li [4].

\begin{theorem}
If $\o_0$ is a K\"ahler metric on a compact complex manifold $X$ , and if $\f\in\PSH(X,\o_0)\cap C^\i(X)$, {$\o_\f>0$, and $0<\al\le1$,} then the distance function $d_{\o_\f}$ for
$\o_\f=\o_0+\ddc\f$ is H\"older continuous with respect to $d_0$, the distance function for $\o_0$ with H\"older exponent $\al/2$. In other words,
\be\tag{1.1}
d_{\o_\f}(x,y)\le C d_0(x,y)^{\al/2}
\ee
for all $x,y\in X$. Here $C$ depends on $(X,\o_0)$ and $\|\f\|_{C^\al(\o_0)}$.
\end{theorem}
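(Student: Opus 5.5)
Fix $x,y\in X$ with $r=d_0(x,y)$ small (for $r$ bounded below, the claim follows from a diameter bound, handled the same way at scale $r\sim 1$). The plan is to exhibit a path from $x$ to $y$ whose $\o_\f$-length is $\lesssim r^{\al/2}$. We cannot control $\tr_{\o_0}\o_\f$ pointwise, since only $\|\f\|_{C^\al}$ enters. So instead of fixing one path, we average over a family of paths that sweep out a ball of radius comparable to $r$, and use the integral identity
$$\int_{B}\tr_{\o_0}\o_\f\,\o_0^n=\int_B (n+\Delta_0\f)\,\o_0^n .$$
The right side is estimated by integrating by parts against a cutoff.

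\textbf{Step 1 (cutoff estimate).} Work in holomorphic coordinates around $x$ where $\o_0$ is uniformly comparable to the Euclidean metric at scale $r\le r_0$. Take a cutoff $\chi$ with $\chi=1$ on $B_{2r}(x)$, support in $B_{4r}(x)$, and $|\nabla\chi|\le C/r$, $|\nabla^2\chi|\le C/r^2$. Since $\o_\f>0$ we have $\tr_{\o_0}\o_\f>0$, so
$$\int_{B_{2r}}\tr_{\o_0}\o_\f\,\o_0^n\le \int \chi\,(n+\Delta_0\f)\,\o_0^n .$$
Subtracting the constant $\f(x)$ and integrating by parts twice gives
$$\int\chi\Delta_0(\f-\f(x))\le C\int |\f-\f(x)|\,\bigl(|\Delta_0\chi|+|\nabla\chi|\bigr)\le C\|\f\|_{C^\al}\,r^{\al}\,r^{-2}\,r^{2n}.$$
The error from $\o_0$ not being flat is lower order. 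Hence
$$\frac{1}{r^{2n}}\int_{B_{2r}}\tr_{\o_0}\o_\f\,\o_0^n\le C\,r^{\al-2}.$$

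\textbf{Step 2 (averaging over paths).} Let $v=y-x$ in coordinates. For $z$ in the transversal ball $D=B_{r}(0)\cap v^\perp$, take the broken path $\gamma_z$ from $x$ to $x+z$, then along the segment from $x+z$ to $y+z$, then to $y$. Its $\o_\f$-length is bounded by integrals of $(\tr_{\o_0}\o_\f)^{1/2}$ along the segments. By Fubini, the middle segments fill a cylinder of volume $\sim r^{2n}$ inside $B_{2r}(x)$, so by Cauchy--Schwarz
$$\frac{1}{|D|}\int_D L_{\o_\f}(\text{middle segment})\,dz\le\Bigl(\frac{C r^{2}}{r^{2n}}\int_{B_{2r}}\tr_{\o_0}\o_\f\Bigr)^{1/2}\le C\,r^{\al/2}.$$
The two short legs from $x$ and to $y$ are rays emanating from a single point, so averaging them introduces a Jacobian weight $|w-x|^{1-2n}$, which is not $L^1$-controllable by Step 1 alone.

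\textbf{Step 3 (the short legs — main obstacle).} I would handle the legs by a dyadic chain. Apply Step 2 at scales $r_k=2^{-k}r$ to connect $x$ to points of $B_{r_k}(x)$ at cost $C r_k^{\al/2}$. Choose successive points $x_k$ so that each consecutive pair is joined by a path of $\o_\f$-length at most $C r_k^{\al/2}$; such points exist because the averaged bound holds on a set of positive measure of transversal parameters. Summing the geometric series $\sum_k r_k^{\al/2}\le C r^{\al/2}$, and using $x_k\to x$ together with continuity of $d_{\o_\f}$ (valid since $\f$ is smooth), gives $d_{\o_\f}(x,x_0)\le Cr^{\al/2}$, and similarly at $y$. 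Combining with Step 2 yields (1.1). The constants depend only on $(X,\o_0)$ and $\|\f\|_{C^\al}$, never on higher norms of $\f$, which is the point of the theorem.
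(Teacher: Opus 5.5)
\textbf{There is a genuine gap in Step 3}, which is where the difficulty lies.

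Step 2 does not join prescribed points. The positive-measure set of good transversal parameters $z$ only gives good translated segments $[w+z,w'+z]$, and their endpoints move with $z$. So every use of Step 2 to join two given points brings back the two legs. Those legs actually lie in the real hyperplane $x+v^\perp$, a null set, so no volume estimate such as Step 1 can control them.

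Your dyadic chain inherits this problem. Once $x_k$ is fixed, the only freedom in the link from $x_k$ to $x_{k+1}$ is its far endpoint. The candidate links are therefore segments emanating from the single point $x_k$, which is exactly the configuration you said cannot be controlled. The chain must also end at the specific point $x_0=x+z$ already chosen in Step 2. As written, the argument is circular and the legs are never estimated.

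\textbf{The missing idea} is that rays from a fixed point \emph{are} controllable. You must use the cutoff estimate at every scale around the ray's origin, together with the H\"older decay of $\f-\f(p)$ there, not only at scale $r$. This is the paper's argument; Theorem 2 is the case $\e=1/4$ of Theorem 3. In the paper's notation, take endpoints $p=0$ and $q$, with $d=|q|$. Average the $\o_\f$-length of the broken path $[p,x]\cup[x,q]$ over $x$ in the ball $B_{\e d}(m)$, where $m$ is the midpoint of $[p,q]$. For fixed $t$:
\begin{itemize}
\item write $(\D\f)(tx)=t^{-2}\D_x\bigl(\f(tx)-\f(0)\bigr)$;
\item integrate by parts in $x$ against a cutoff $\rho$ with $|\D\rho|\le C(\e d)^{-2}$;
\item use $|\f(tx)-\f(0)|\le Kt^\al|x|^\al$.
\end{itemize}
After Cauchy--Schwarz the $t$-integrand is $\lesssim t^{\al/2-1}$, which is integrable on $[0,1]$. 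The Jacobian loss $t^{-2}$ is paid for by the factor $t^{\al}$. Two rays then suffice, with no cylinder and no chain.

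\textbf{Alternative repair of the chain.} Choose all points $x_k\in B_{r_k}(x)$ and $y_k\in B_{r_k}(y)$, for $k\ge0$, independently and uniformly at random, and average jointly rather than sequentially. A point on the segment between two such independent points has density $\lesssim r_k^{-2n}$. Step 1 alone then bounds the expected length of each link by $Cr_k^{\al/2}$. Summing, the expected length of the whole infinite polygon from $x$ to $y$ is at most $Cr^{\al/2}$.
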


On concentric coordinate balls $U_0\ssub U_1$, the same estimate holds for endpoints in $U_0$ and paths in $U_1$, with $C$ also depending on these balls.

\v
The proof of Theorem 2 in [4] makes use of the scale invariant Poincar\'e inequality, {which can fail in the singular case}. In Theorem 3 below, we introduce
a different approach which allows us to treat singular metrics and also provides refined
geometric information, even in the smooth setting.

\v
Theorem 2 says that $d_\f(p,q)\le A d_0(p,q)^{\al/2}$. In other words, for every $p,q\in X$, if $\g$ is the $\o_0$ geodesic joining $p$ to $q$, {there} is another (non-unique) curve $\eta$ joining $p$ to $q$ such that
\be\tag{1.2}
\ell_\f(\eta)\le C\ell_0(\g)^{\al/2}.
\ee
One may naturally ask for the relationship between the curves $\eta$ and $\g$. For example, can
we choose $\eta=\g$? In the following, we show that this can be done, up to an $\e$ error.

\begin{theorem}
Let $(X,\o_0)$ be a compact K\"ahler manifold and $\o=\o_0+\ddc\f$ a K\"ahler metric satisfying
$$
|\f(p)-\f(q)|\le Kd_0(p,q)^\al
$$
for all $p,q\in X$, {where $0<\al\le1$.} Then there exists $C>0$, depending on $(X,\o_0)$ and $K,\al$, with the following property. For every $\e\in(0,1/4]$ and for every $p,q\in X$ there are curves $\g$ and $\eta$ joining $p$ to $q$ such that
\begin{enumerate}
\item $d_0(p,q)\le\ell_0(\g)\le Cd_0(p,q)$,
\item $d_0(\g(t),\eta(t))<\e$ for all $t\in[0,1]$
\end{enumerate}
such that
$$
\ell_\f(\eta)\le\frac C\e\cdot\ell_0(\g)^{\al/2}
$$
and hence
\be\tag{1.3}
d_\f(p,q)\le\ell_\f(\eta)\le\frac C\e\cdot d_0(p,q)^{\al/2}.
\ee
\end{theorem}
\noindent\textit{Remark:} Taking $\e=1/4$ we recover Theorem 2.

\v
We next show how the technique {used} in the proof can be used to prove the following.

\begin{theorem}\label{thm:projective}
Let $W\sub\P^N$ be a connected normal projective variety, and let
$\f\in\PSH(W,\o_0)\cap C^\al(W)\cap C^\i(W^{\reg})$, where $0<\al\le1$.
Here $\o_0:=\o_{\FS}|_W$ and we set $\o_\f=\o_0+\ddc\f>0$.
Let $d_0,d_{\o_\f}$ be the distance functions on $W^{\reg}$ associated to
$\o_0$ and $\o_\f$, respectively. Fix a finite cover by projection
neighborhoods as in Section~\ref{sec:distance}, using only $(W,\o_0)$,
and let $D$ be the largest degree of these projections. Then there is
$C>0$, depending only on the fixed background data, $\al$ and
$\|\f\|_{C^\al(W)}$, such that for all $x,x'\in W^{\reg}$,
$$
 d_{\o_\f}(x,x')\le C d_0(x,x')^{\al/(2D)}.
$$
\end{theorem}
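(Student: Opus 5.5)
The idea is to reduce to the smooth case (Theorem 3) through finite projections. Near each point $w\in W$ choose a linear projection $\pi:U\to B\subset\C^n$ that is finite of degree $\le D$ on a neighborhood $U$ of $w$, where $B$ is a coordinate ball. Compactness gives finitely many such projection neighborhoods, and they depend only on $(W,\o_0)$.

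\textbf{Step 1 (push-forward of the potential).} On $B$ we would form the symmetrized potential
$$\psi(z)=\sum_{x\in\pi^{-1}(z)}\f(x),$$
counted with multiplicity, and similarly push forward the local potential of $\o_0$ (restriction of $|Z|^2$, or $\log(1+|Z|^2)$). Then $\pi_*\o_\f$ is a positive current on $B$ with a continuous psh potential. The key analytic input is that fibers of a degree-$D$ branched cover vary Hölder continuously with exponent $1/D$ in $z$: by the Puiseux/Cauchy root estimate for the symmetric-function description of $U$, the fibers as unordered $D$-tuples satisfy $\mathrm{dist}_{\mathrm{Hausdorff}}(\pi^{-1}(z),\pi^{-1}(z'))\le C|z-z'|^{1/D}$. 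Since $\f\in C^\al$, it follows that $\psi\in C^{\al/D}(B)$. The pushed-forward background form is likewise a Kähler current dominating a multiple of $\o_{\euc}$ on $B$.

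\textbf{Step 2 (distance downstairs).} We would apply the local version of Theorems 2 and 3 on $U_0\ssub U_1=B$, with Hölder exponent $\al/D$ in place of $\al$. This needs approximation, since $\pi_*\o_\f$ is not smooth along the branch locus. Smoothing $\psi$ by convolution keeps its $C^{\al/D}$ norm and makes the smoothed form $\ge c\,\o_{\euc}$, so the constants are uniform. The result is a curve $\eta$ downstairs, $\e$-close to the Euclidean segment, with
$$\ell_{\pi_*\o_\f}(\eta)\le C|z-z'|^{\al/(2D)}.$$
Theorem 3 matters here and not just Theorem 2: the $\e$-closeness keeps $\eta$ inside $B$ and lets us perturb it, by general position, to avoid the branch divisor except at finitely many points. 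There $\pi$ is a local isometry from $(U^{\reg},\o_\f)$ to $(B,\pi_*\o_\f)$ on each sheet. Passing to the limit in the smoothing then requires the lengths to converge on compact subsets off the branch locus, which we would arrange by approximating the upstairs $\f$ (smooth on $W^{\reg}$) appropriately.

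\textbf{Step 3 (lifting and monodromy).} This is where we expect the main obstacle. Given $x,x'\in U^{\reg}$ with $z=\pi(x)$ and $z'=\pi(x')$, the lift of $\eta$ starting at $x$ ends at some point of $\pi^{-1}(z')$, not necessarily at $x'$. The plan is to join $x'$ to the other sheet points via short loops in $B$ around the branch locus whose lifts permute the sheets. The lengths of these loops are controlled by the same downstairs estimate, applied at scale comparable to $d_0(x,x')$ in the $\e$-neighborhood. Since $U$ is connected near $x$ (normality gives local irreducibility), the monodromy acts transitively on the sheets meeting a small neighborhood. Hence at most $D-1$ such loops are needed, each of $\o_\f$-length $\le C\,d_0(x,x')^{\al/(2D)}$; we would use $|z-z'|\le C d_0(x,x')$. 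The delicate point is the case where $x'$ lies on a sheet that stays far from $x$ in $U$ although $\pi(x)\approx\pi(x')$. We would handle it by choosing the projection neighborhoods so that the fiber of $\pi$ over each center is a single point, i.e.\ $\pi^{-1}(w)\cap U=\{w\}$ for the centered point $w$, so that for nearby points the relevant sheets coalesce.

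\textbf{Step 4 (globalization).} Finally we use a Lebesgue number for the cover. If $d_0(x,x')$ is smaller than it, both points lie in one $U_0$ and Steps 1–3 apply. Otherwise $d_{\o_\f}(x,x')\le\diam_{\o_\f}$, which is bounded by chaining finitely many local estimates, and this is $\le C\,d_0(x,x')^{\al/(2D)}$ after enlarging $C$.
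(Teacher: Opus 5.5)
\textbf{Verdict: Step 3 has a genuine gap.} Your Steps 1, 2 and 4 follow the paper's route. You push $h+\f$ forward to a continuous psh potential $F$ on the base. You get $F\in C^{\al/d}$ from the $|z-z'|^{1/d}$ root-displacement lemma. You rerun the averaging argument of Theorem 3 downstairs to get $d_{\pi_*\o_\f}(z,z')\le C|z-z'|^{\al/(2d)}$, and then globalize. Two small corrections. First, $\pi$ is not a local isometry on each sheet, because $\pi_*\o_\f=\sum_j s_j^*\o_\f$. What holds, and what you need, is $\pi^*\pi_*\o_\f\ge\o_\f$, so each lift is no longer than its base curve. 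Second, a generic curve misses the branch locus entirely, since it has real codimension two. The paper also avoids smoothing by doing the integration by parts in (2.2) distributionally.

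\textbf{Why Step 3 fails.} The downstairs estimate bounds a distance, i.e.\ an infimum over all paths in the complement of the branch locus. It gives no control over the homotopy class of the short paths; the averaging simply picks whichever broken segment is short. So nothing in your argument produces a loop at $z'$ that realizes a \emph{prescribed} permutation of sheets and has $\pi_*\o_\f$-length $\le C\,d_0(x,x')^{\al/(2D)}$.
\begin{itemize}
\item Approximating a chosen loop via Theorem 3 preserves its class only if the approximation error is smaller than the loop's distance to the branch locus.
\item Inside a ball of radius $\sim|z-z'|$, the branch locus can have structure at a scale $\delta\ll|z-z'|$, for instance two components at distance $\delta$ with different local monodromies. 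This forces $\e\sim\delta/|z-z'|$ and gives a useless bound $C\e^{-1}|z-z'|^{\tau}$. Subdividing instead costs a factor $N^{1-\tau}$.
\item Transitivity of monodromy gives existence of connecting loops, not short ones.
\item Your centering condition $\pi^{-1}(w)\cap U=\{w\}$ constrains only the fiber over $w$. It says nothing about scale $d_0(x,x')$ around an arbitrary $x$.
\end{itemize}

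\textbf{The missing idea: chaining along the upstairs curve.} The paper never controls monodromy. Take an $\o_0$-almost-minimizing curve $\l$ from $x$ to $x'$ and put $z(t)=\pi(\l(t))$. Since $|z(t)-z(0)|\le C\ell_{\o_0}(\l)$, for every $t$ there is a base curve from $z(0)$ to $z(t)$ of length $\lesssim L:=C\,d_0(x,x')^{\al/(2d)}$. Lifting it backwards from $\l(t)$ shows that $\l(t)$ lies within $\o_\f$-distance $\approx L$ of one of the at most $d$ points of $\pi^{-1}(z(0))$. Recording which fiber point works gives closed sets $S_1,\ldots,S_d$ covering $[0,1]$. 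Connectedness of $[0,1]$ then yields a chain $x=x_0^1,\l(t_1),x_0^{k_2},\l(t_2),\ldots,x'$. It has at most $2d-1$ hops of length $\lesssim L$, plus connectors of vanishing length. This holds whatever the monodromy is.
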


\begin{corollary}\label{cor:completion}
Under the hypotheses of Theorem~\ref{thm:projective}, the metric completion
$\widehat W$ of $(W^{\reg},d_{\o_\f})$ is compact. If
$\o_\f\ge c\o_0$ for some $c>0$, the identity on $W^{\reg}$ extends to a
canonical homeomorphism $W\simeq\widehat W$, and the completed distances satisfy
$$
\sqrt c\,d_0(x,x')\le d_{\o_\f}(x,x')
\le C d_0(x,x')^{\al/(2D)},\qquad x,x'\in W.
$$
\end{corollary}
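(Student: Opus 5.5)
Two things have to be shown: that the metric completion $\widehat W$ is compact, and that under $\o_\f\ge c\o_0$ it is canonically homeomorphic to $W$. Both come from comparing $d_{\o_\f}$ with $d_0$ on $W^{\reg}$.

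\emph{Compactness.} I will use Theorem~\ref{thm:projective}, which gives $d_{\o_\f}\le C d_0^{\al/(2D)}$ on $W^{\reg}$. So the identity $(W^{\reg},d_0)\to(W^{\reg},d_{\o_\f})$ is uniformly continuous (H\"older), and it extends to a continuous surjection from the $d_0$-completion onto $\widehat W$. The point to check is that the $d_0$-completion of $W^{\reg}$ is compact. For the restricted Fubini--Study length metric this holds because $W^{\reg}$ is dense in $W$, and every point of $W$ can be joined to nearby regular points by curves in $W^{\reg}$ of short $\o_0$-length. This uses normality, so that $W^{\reg}$ is locally connected near singular points, together with the standard fact that the intrinsic metric of a projective variety is H\"older comparable to the chordal metric (via curve selection, or Łojasiewicz). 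It follows that $(W^{\reg},d_0)$ is totally bounded. Total boundedness passes through uniformly continuous surjections, so $(W^{\reg},d_{\o_\f})$ is totally bounded as well. Its completion $\widehat W$ is therefore compact.

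\emph{Lower bound and the homeomorphism.} Assume now $\o_\f\ge c\o_0$. For any curve in $W^{\reg}$ we have $\ell_\f\ge\sqrt c\,\ell_0$, and taking infima gives $\sqrt c\, d_0\le d_{\o_\f}$ on $W^{\reg}$. Combined with the upper bound, the identity on $W^{\reg}$ is bi-uniformly continuous between $d_0$ and $d_{\o_\f}$. Hence the two metrics have the same Cauchy sequences, the completions are canonically identified, and the two-sided inequality passes to the limit on the completion.

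It remains to identify the $d_0$-completion of $W^{\reg}$ with $W$. I will define the map $\Phi$ from the completion to $W$ by sending a $d_0$-Cauchy sequence to its limit in $W\subset\P^N$. This is well defined and $1$-Lipschitz, since $d_0$ dominates the chordal distance. It is surjective because $W^{\reg}$ is dense in $W$. The main obstacle is injectivity, that is, showing that two $d_0$-Cauchy sequences converging to the same point $p\in W$ are $d_0$-equivalent. For this I will use that normal varieties are locally irreducible, so $B(p,r)\cap W^{\reg}$ is connected for small $r$. I also need a local length bound: any two regular points in a small chordal neighbourhood of $p$ can be joined inside $W^{\reg}$ by a curve of $\o_0$-length $\to0$ as the neighbourhood shrinks. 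I would get this from the finite projection neighbourhoods of Section~\ref{sec:distance}. Locally $W$ is a finite branched cover $\pi$ of a ball in $\C^n$. Join the images by a segment downstairs that avoids the branch locus, which has real codimension two, so generic perturbations avoid it. Lift the segment, and close up the monodromy by going around the branch divisor in small loops, which have small length thanks to the H\"older bound on the lifted metric. This is exactly the monodromy bookkeeping used in the proof of Theorem~\ref{thm:projective}, applied with $\f=0$.

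With injectivity in hand, $\Phi$ is a continuous bijection from a compact space onto a Hausdorff space, hence a homeomorphism. Composing with the identification of the two completions gives the canonical homeomorphism $W\simeq\widehat W$ extending the identity on $W^{\reg}$, together with the stated two-sided estimate.
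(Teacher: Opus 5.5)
Your proof is correct and is essentially the paper's. The H\"older upper bound extends the identity from the $d_0$-completion, which is identified with $W$ via Theorem~\ref{thm:background} and normality, onto a compact $\widehat W$; under $\o_\f\ge c\o_0$, the bound $\sqrt c\,d_0\le d_{\o_\f}$ lets the identity extend in both directions, and the inequalities pass to the completion.

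The injectivity you single out needs no further work. It follows at once from the local comparison $d_0(x,x')\le C|x-x'|^{\b}$ that you already invoked for compactness, exactly as in the paper. The finite-projection detour, as you phrase it, would not supply it. The proof of Theorem~\ref{thm:projective} with $\f=0$ only bounds $d_0$ by a power of $d_0$, and the bookkeeping does not close up the monodromy with small loops around the branch locus. A version avoiding Theorem~\ref{thm:background} would instead have to run that bookkeeping along an arbitrary curve in $W^{\reg}\cap B(p,r)$ avoiding $\pi^{-1}(B)$, using $|z_t-z_0|\le Cr$ in place of $\ell_{\o_0}(\l[0,t])$ when applying (4.6).
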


Here
$$
d_{\o_\f}(x',x)=\inf\left\{\I_\g|\g'(t)|_{\o_\f}\,dt:
\g:[0,1]\ra W^{\reg},\ \g(0)=x,\ \g(1)=x'\right\}.
$$
{Here the curves are piecewise smooth, and $C^\al(W)$ denotes local ambient H\"older continuity, with $0<\al\le1$.}

\v
\noindent\textit{Remark:} We state the theorem here for normal varieties, but the proof works as well in the case of compact connected normal K\"ahler spaces; see Subsection~\ref{subsec:normal-kahler}.

\v
The proof is local and gives exponent $\al/(2d)$ on a projection
neighborhood of degree $d$. The global choice $\al/(2D)$ is not intended
to be optimal; $D$ depends on the fixed cover and is independent of $\f$.
Corollary~\ref{cor:completion} separates compactness of the metric completion
from its identification with the complex space under strict positivity.

\v
We now place the result in the K\"ahler--RCD setting. Let $X$ be a compact connected normal K\"ahler variety with klt singularities, and let $\t$ be a smooth K\"ahler metric obtained from local or global holomorphic embeddings. For example, on a projective variety it may be the restriction of a Fubini--Study metric. Let $\O$ be an adapted volume measure: on a chart where a nonvanishing section $\eta$ generates $mK_X$, its restriction to $X^{\reg}$ is, up to a smooth positive factor,
$$
(\eta\wedge\bar\eta)^{1/m},
$$
with the usual positivity convention. For $p>1$, let $\cK_p(X,\t,\O)$ be the class of closed positive currents $\o=\t+\ddc\f$ with $\f\in\PSH(X,\t)\cap L^\i(X)$, smooth and K\"ahler on $X^{\reg}$, and satisfying
$$
\frac{\o^n}{\O}\in L^p(X,\O).
$$
Write $(\widehat X,d,\m)$ for the metric completion of $(X^{\reg},d_\o)$ equipped with the canonical Borel extension of $\o^n$. We call this a K\"ahler--RCD space when it is an $\operatorname{RCD}(K,2n)$ space for some $K\in\R{}$. Compactness and uniform Sobolev estimates in this setting are treated in [13,14].

\v
In the geometric K\"ahler--RCD settings of [15,16,6,17,18], the completion is canonically homeomorphic to the analytic variety. This identification is a theorem in those works, rather than a formal consequence of the RCD axioms. In our general statement we first compare distances on $X^{\reg}$ and obtain the identification under the lower metric bound.

\v
The final application, Theorem~\ref{thm:rcd}, uses the analytic H\"older estimate supplied by the companion work [7], in preparation. All preceding proofs, including the smoothable K\"ahler--Einstein and uniform projective applications, do not use this input. If $\o\in\cK_p(X,\t,\O)$ and $(\widehat X,d,\m)$ is noncollapsed $\operatorname{RCD}(-\l,2n)$, combining that analytic input with our distance estimate gives constants $C>0$ and $\d\in(0,1]$ such that
$$
d_\o(x,y)\le C d_\t(x,y)^\d,\qquad x,y\in X^{\reg}.
$$
If also $\o\ge c_0\t$, the completion is identified with $X$ and
$$
c_0^{1/2}d_\t(x,y)\le d(x,y)\le C d_\t(x,y)^\d,\qquad x,y\in X.
$$
The smoothable K\"ahler--Einstein case is Theorem 1. Theorem~\ref{thm:uniform} gives the comparison uniformly for polarized families with Ricci curvature bounded below and diameter bounded above.

\v
The paper is organized as follows. Section 2 proves the smooth distance estimate of Theorem 3. Section 3 gives the algebraic root estimate. Section 4 proves Theorem 4 by finite projection and monodromy, proves Corollary~\ref{cor:completion}, and extends these results to normal K\"ahler spaces. Section 5 treats the uniform projective application first and the K\"ahler--RCD application last.

\v
\noindent\textit{Acknowledgments.} The main result was communicated to Valentino Tosatti in May 2026. The authors thank him for insightful discussions.

\section{Proof of Theorem 3}
\begin{proof}
As in the statement of the theorem, we shall use $C>0$ to denote a constant
depending on $(X,\o_0)$ and $K,\al$ (but independent of $\e$).
We allow the constant $C$ to change value from line to line to avoid introducing
new constants $C_1,C_2,\ldots$ etc.

\v
First assume $p\ne q$ and $d_0(p,q)$ is below a fixed background coordinate scale, so all segments and cutoff supports below remain in one chart.

\v
We work in local coordinates so that $C^{-1}\o_{\mathrm{euc}}\leq\o_0
\leq C\o_{\mathrm{euc}}$. Here $\o_{\mathrm{euc}}$ is the euclidean
metric on $\C^n$. Note that
\be\tag{2.1}
\o_\f
=\o_0+\sq\ddb\f
=\o_{\mathrm{euc}}+\sq\ddb(\f-\psi)
\ee
where $\psi$ is a local potential for
{$\o_{\mathrm{euc}}-\o_0$}.
Since $\|\psi\|_{C^\al}\leq C$, {enlarging $K$ if necessary,} we will write $\f$ instead of
$\f-\psi$ so that in local coordinates,
$\o_\f=\o_{\mathrm{euc}}+\sq\ddb\f$.

\v
We may assume $p=(0,0,\ldots,0)$ and $q=(q,0,\ldots,0)\in\C^n$.
Then $\g$, the straight line segment joining $p$ to $q$, is a
$\o_{\mathrm{euc}}$ geodesic and
$C^{-1}d_0(p,q)\leq |q|=\ell_{\mathrm{euc}}(\g)\leq Cd_0(p,q)$.

\v
Let $r=\e|q|=\e d$. Let $m=q/2$ be the midpoint of the line
segment joining $p$ to $q$.

\v
For $x\in B_r(m)$, the euclidean ball of radius $r$ centered at $m$, we define
$\eta_{1,x}:[0,1]\ra\C^n$ by the formula $\eta_{1,x}(t)=tx$.
This is the euclidean segment joining $p$ to $x$. Similarly we let $\eta_{2,x}$
to be the euclidean segment joining $x$ to $q$ and we define
$\eta_x=\eta_{1,x}\cup\eta_{2,x}$. We wish to show that for an appropriately
chosen value of $x$ that $\eta$ and $\g$ satisfy (1.3).

\v
Parametrizing $\g$ and $\eta_x$ affinely on each half of $[0,1]$ gives $|\eta_x(t)-\g(t)|\le|x-m|<r$. Shrinking the fixed coordinate scale ensures $d_0(\eta_x(t),\g(t))<\e$.

\v
We have
\begin{align*}
\ell_\f(\eta_{1,x})
&=\I_0^1\bigl(g_{i\bar j}x^i\bar x^j\bigr)^{1/2}\,dt\\
&\leq\I_0^1
\bigl(g^{\mathrm{euc}}_{i\bar j}x^i\bar x^j\cdot
\tr_{g_{\mathrm{euc}}}(g)\bigr)^{1/2}\,dt\\
&=|x|\I_0^1\bigl[n+(\D_{g_{\mathrm{euc}}}\f)(tx)\bigr]^{1/2}\,dt.
\end{align*}
Let $\r(x)\in C_c^\i(B_{2r}(m){)}$ be the usual cutoff function which
equals one on $B_r(m)${, with $0\leq\r\leq1$ and
$|\D_{\mathrm{euc}}\r|\leq Cr^{-2}$}, let $dx$ denote Euclidean measure,
and let $V=\vol_{\mathrm{euc}}(B_{2r}(m))$, and
$d=|q|=d_{\mathrm{euc}}$.
{Also put $V_r=\vol_{\mathrm{euc}}(B_r(m))$, so that
$V=2^{2n}V_r$.}
Then
\begin{align}
\I_{B_r(m)}\ell_\f(\eta_{1,x})\,dx
&\leq\I_{B_{2r}(m)}\ell_\f(\eta_{1,x})\r(x)^2\,dx\notag\\
&\leq d\I_{B_{2r}(m)}\I_0^1
\bigl[(n+(\D_{\mathrm{euc}}\f)(tx))\r(x)\bigr]^{1/2}\,dt\,dx\notag\\
&\leq d\cdot V^{1/2}\I_0^1
\left[\I_{B_{2r}}(n+(\D_{\mathrm{euc}}\f)(tx))\r(x)\,dx\right]^{1/2}\,dt\notag\\
&=d\cdot V^{1/2}\I_0^1
\left[\I_{B_{2r}}\left(n+\frac1{t^2}\D_{\mathrm{euc}}
(\f(tx)-\f(0))\right)\r(x)\,dx\right]^{1/2}\,dt\notag\\
&=d\cdot V^{1/2}\I_0^1
\left[\I_{B_{2r}}\left(n\r(x)+\frac1{t^2}\cdot
(\f(tx)-\f(0))\D\r(x)\right)\,dx\right]^{1/2}\,dt\tag{2.2}\\
&\leq Cd\cdot V^{1/2}\I_0^1
\left[nV+\I_{B_{2r}}\frac1{t^2}|\f(tx)-\f(0)|\frac1{r^2}\,dx\right]^{1/2}\,dt\notag\\
&\leq Cd\cdot V^{1/2}\I_0^1
\left(V^{1/2}+\left[\I_{B_{2r}}\frac1{t^2}
|\f(tx)-\f(0)|\frac1{r^2}\,dx\right]^{1/2}\right)\,dt\notag\\
&\leq Cd\cdot V+Cd\cdot V^{1/2}\cdot K^{1/2}\cdot\frac1r
\I_0^1\frac1t\left[\I_{B_{2r}}|tx|^\al\,dx\right]^{1/2}\,dt\notag\\
&\leq Cd\cdot V+Cd\cdot V^{1/2}\cdot\frac1r
\I_0^1t^{\al/2-1}\left[\I_{B_{2r}}|x|^\al\,dx\right]^{1/2}\,dt\notag\\
&=Cd\cdot V+Cd\cdot V^{1/2}\cdot\frac1r
\left[\I_0^1t^{\al/2-1}\,dt\right]
\left[\I_{B_{2r}}|x|^\al\,dx\right]^{1/2}\notag\\
&\leq Cd\cdot V+\frac C\e\cdot V^{1/2}
\cdot\left[\frac1\al\right]\cdot[d^\al\cdot V]^{1/2}\notag\\
&=Cd\cdot V+\frac1\e\cdot Cd^{\al/2}\cdot V
\leq\frac1\e\cdot Cd^{\al/2}\cdot V.\notag
\end{align}
The same inequality holds if we replace $\eta_{1,x}$ with $\eta_{2,x}$.
Adding the two we obtain
$$
\frac1{V_r}\I_{B_r(m)}\ell_\f(\eta_x)\,dx
\leq\frac C\e\cdot d^{\al/2}
\leq\frac C\e\cdot d_0(p,q)^{\al/2}.
$$
Thus there exists $W_r\sub B_r(m)$, a set of positive measure, such that
for all $x_0\in W_r$
\be\tag{2.3}
d_\f(p,q)\leq\ell_\f(\eta_{x_0})
\leq{\frac1{V_r}}
\I_{B_r(m)}\ell_\f(\eta_x)\,dx
\leq\frac C\e\cdot d_0(p,q)^{\al/2}.
\ee

\v
For arbitrary $p,q$, subdivide a minimizing $\o_0$-geodesic into a uniformly bounded number of sufficiently short subarcs and concatenate the local constructions. The length estimate follows by concavity, and the parameterwise $\e$-closeness is preserved.

\end{proof}

\section{Growth of Algebraic Functions}

In order to prove Theorem 4 we shall need some preliminary results on growth of algebraic
functions.

\v
Let $\pi:X\ra Z$ be a covering space and $\g:[0,1]\ra Z$ a continuous curve. Let
$z=\g(0)$, $z'=\g(1)$, and $x\in X_z:=\pi^{-1}(z)$.
{The} ``lift of $\g$ to $x$'' is the unique curve
$\l:[0,1]\ra X$ such that $\l(0)=x$ and $\pi\circ\l=\g$.
In particular, $\l(1)=x'$ for some $x'\in\pi^{-1}(z')$.

\v
Let $B_r(p)=\{z\in\C{}:|z-p|<r\}$, let $d\geq1$ and let
$a_1,\ldots,a_d$ be holomorphic functions on $B_R(0)^n\sub\C^n$,
where $R=4d(4d+4)^{d-1}$.
{In this section, for $z\in\C^n$, $|z|$ denotes the supremum norm
$\max_\n|z_\n|$.}
Assume $\sup_{B_R(0)^n}|a_j|<1$ for $1\leq j\leq d$ and let
$$
f(z,x)=x^d+\s_{j=1}^d a_j(z)x^{d-j}.
$$
Note that by Fujiwara's bound, for a fixed $z\in B_R(0)^n$, if $f(z,x)=0$
then $|x|<2$. Let $B\sub B_R(0)^n$ be the subvariety consisting of
those points $z\in B_R(0)^n$ defined by the property
$|\{x\in\C{}:f(z,x)=0\}|<d$. Here the roots of the polynomial
$f(z,x)=0$ are counted without multiplicity. Let
$X'=\{(z,x)\in B_1(0)^n\times B_2(0):f(z,x)=0\}$ and let
$\pi:X'\ra B_1(0)^n$ be the map $(z,x)\mapsto z$.
Let $Z=B_1(0)^n\setminus B$ and $X=\pi^{-1}(Z)$. Then
$\pi:X\ra Z$ is a covering map of degree $d$.

\begin{lemma}\label{lem:root}
Let $a_1,\ldots,a_d$ be holomorphic functions as above, and define $f(z,x)$
and $B$ as above. Let $z',z\in Z$ and assume the curve
$\g:[0,1]\ra Z$ has the properties
\begin{enumerate}
\item $\g(0)=z$, $\g(1)=z'$.
\item $\g(t)\notin B$ for all $t\in[0,1]$.
\item $|\g(t)-z|\leq|z'-z|$ for all $t\in[0,1]$.
\end{enumerate}

{Let $(z,x)\in\pi^{-1}(z)$, let
$\l(t)=(\g(t),\xi(t))$ be the lift of $\g$ starting at $(z,x)$,
and let $x'=\xi(1)$.}
Then
\be
{|\xi(1)-\xi(0)|}
=|x'-x|\leq(2d+1)|z'-z|^{1/d}.
\tag{3.1}\label{eq:root-displacement}
\ee
\end{lemma}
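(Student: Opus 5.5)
The plan is to compare the polynomial $f(\g(t),\cdot)$ with the fixed polynomial $f(z,\cdot)$ along the whole curve. Then a continuity (connectedness) argument confines the moving root $\xi(t)$ to a small cluster of roots of $f(z,\cdot)$. Write $\d=|z'-z|$ (sup norm). By hypothesis (3), $|\g(t)-z|\le\d$ for all $t$, and all points lie in $B_1(0)^n$.

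\textbf{Step 1: coefficient control.} Fix $w=\g(t)$. Restrict $a_j$ to the complex line $s\mapsto z+s\,(w-z)/|w-z|$. In the sup norm, this line stays inside $B_R(0)^n$ for $|s|<R-1$, because $z\in B_1(0)^n$. The one-variable Schwarz lemma for a function bounded by $1$ on the disc of radius $R-1$ then gives
$$
|a_j(w)-a_j(z)|\le \frac{2|w-z|}{R-1}\le\frac{2\d}{R-1}.
$$
This bound is independent of $n$, which is why I use the line restriction rather than a gradient bound.

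\textbf{Step 2: $\xi(t)$ nearly solves the frozen equation.} Since $f(\g(t),\xi(t))=0$ and $|\xi(t)|<2$ by Fujiwara's bound, we get
$$
|f(z,\xi(t))|=|f(z,\xi(t))-f(\g(t),\xi(t))|\le\s_{j=1}^d|a_j(\g(t))-a_j(z)|\,2^{d-j}\le\frac{2^{d+1}\d}{R-1}.
$$
Write $f(z,\xi)=\p_{i=1}^d(\xi-x_i)$, where $x_1,\dots,x_d$ are the roots at $z$ with multiplicity. Some factor must satisfy $|\xi(t)-x_i|\le\rho$, with
$$
\rho:=\Bigl(\frac{2^{d+1}\d}{R-1}\Bigr)^{1/d}.
$$
Hence $\xi([0,1])\sub U:=\bigcup_i \overline{B_\rho(x_i)}$.

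\textbf{Step 3: connectedness.} The curve $\xi([0,1])$ is connected, so it lies in the connected component of $U$ containing $\xi(0)=x$. This component is a chain of at most $d$ discs of radius $\rho$, so its diameter is at most $2d\rho$. Therefore $|x'-x|\le 2d\rho$.

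\textbf{Step 4: constants.} It remains to check that $2d\rho\le(2d+1)\d^{1/d}$, i.e.\ that $2d\,(2^{d+1}/(R-1))^{1/d}\le 2d+1$. This is where the specific choice $R=4d(4d+4)^{d-1}$ enters. For $d\ge2$ one has $R-1\ge 2^{d+1}$, so the left side is at most $2d$. For $d=1$ one can bypass the estimate: $x=-a_1(z)$, so directly $|x'-x|\le 2\d/(R-1)=2\d/3\le3\d$.

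The main obstacle is only this bookkeeping, namely ensuring that the Schwarz-lemma constant carries no dependence on $n$ and that $R$ absorbs the factor $2^{d+1}$. If the paper intends the larger $R$ for use later (e.g.\ iterating on rescaled balls), the argument above has slack to spare.
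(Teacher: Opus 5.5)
Your argument is correct, and it takes a different route from the paper.

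\textbf{The paper's route.} It fixes the frozen roots $x_1=x,\dots,x_d$ of $f(z,\cdot)$. By pigeonhole it finds a root-free annulus $(p-1)\al<|y-x_1|<(p+1)\al$, with $\al=|z'-z|^{1/d}$ and $p\le 2d+1$. A Cauchy-integral coefficient bound then gives $|f(\g(t),y)-f(z,y)|<|z'-z|\le|f(z,y)|$ on the middle circle $|y-x_1|=p\al$. Rouch\'e, or really just non-vanishing on that circle plus continuity, keeps $\xi(t)$ inside $B_{p\al}(x_1)$.

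\textbf{Your route.} You evaluate the perturbation only at the moving root itself. Since $f(\g(t),\xi(t))=0$, the product $\p_i|\xi(t)-x_i|$ is small. So $\xi(t)$ stays in the $\rho$-neighbourhood of the frozen root set, and connectedness traps it in the cluster containing $x$.

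\textbf{What each buys.}
\begin{enumerate}
\item Because $|\xi(t)|<2$, your coefficient perturbations are weighted by $2^{d-j}$. On the paper's circle one only knows $|y|<4d+4$, which forces $R=4d(4d+4)^{d-1}$; for you an $R$ of order $2^{d+1}$ suffices.
\item You avoid both the pigeonhole step and Rouch\'e.
\item Repeated roots and an arbitrary continuous choice of root are handled automatically; the paper needs a separate remark for these.
\item The paper's version does give a little more: the number of roots, with multiplicity, in $B_{p\al}(x_1)$ is conserved along $\g$, so the whole cluster moves together. This is not needed for (3.1), nor for its coordinatewise use in Lemma~\ref{lem:trace-holder}.
\end{enumerate}

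\textbf{Two minor points.}
\begin{enumerate}
\item Your separate treatment of $d=1$ is unnecessary: there $R-1=3$, so $2d\rho=\tfrac83|z'-z|<3|z'-z|$ directly.
\item A connected union of $k$ closed discs of radius $\rho$ need not literally be a chain, but its diameter is still at most $2k\rho$, since the intersection graph is connected. Because $x$ is itself one of the centres, you in fact get $|x'-x|\le(2d-1)\rho$.
\end{enumerate}
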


\begin{proof}
Fix $z$, $z'$ and $x$ as above and write
$$
f(z,x)=\p_{j=1}^d(x-x_j)
$$
with $x=x_1$. We order the $x_j$ such that
$$
0\leq|x_2-x_1|\leq|x_3-x_1|\leq\cdots\leq|x_d-x_1|<\i.
$$
Let $r=|z'-z|$ and $\al=r^{1/d}$.
{We may assume $r>0$, the case $r=0$ being immediate.}
Then, by the pigeonhole principle, there exists $1\leq p\leq2d+1$ and
$1\leq j\leq d$ such that
$$
|x_j-x_1|\leq(p-1)\al<p\al<(p+1)\al\leq|x_{j+1}-x_1|,
$$
where we define $|x_{d+1}-x_1|:=\i$.

\v
We claim that for each $t\in[0,1]$, the polynomials
$f(z,x),f(\g(t),x)\in\C{}[x]$ have the same number of roots in
the ball $B_{p\al}(x_1)$. To see this, we use Rouch\'e's theorem:
if $|x-x_1|=p\al$ then
$$
|f(z,x)|=\p_{j=1}^d|x-x_j|\geq\al^d=r.
$$
On the other hand, if $|x-x_1|=p\al$ then
$|x|{\leq}p\al+|x_1|{<}(2d+1)2+2=4d+4$ so
$$
\begin{aligned}
|f(\g(t),x)-f(z,x)|
&=\left|\s_{j=1}^d(a_j(\g(t))-a_j(z))
{x^{d-j}}\right|\\
&{\leq}
d(4d+4)^{d-1}\max_{1\leq j\leq d}|a_j(\g(t))-a_j(z)|.
\end{aligned}
$$

\v
Fix $t\in(0,1]$ with $\g(t)\neq\g(0)=z$ and
{let} $\C^1\sub\C^n$ be the {affine} complex line containing
$\g(0)$ and $\g(t)$
{, using affine coordinates in which $z=0$, $\g(t)=s:=|\g(t)-z|$,
and the direction vector has supremum norm one}.
{The circle $\pl B_{R-1}(0)$ then lies in the coefficient polydisc,
and $s\leq r<2$, $R\geq4$.} Then
$$
\begin{aligned}
|a_j(\g(t))-a_j(z)|
&={\Bigl|}\frac1{2\pi i}
\I_{{\pl B_{R-1}(0)}}
\frac{a_j(\z)(\g(t)-z)}{(\z-z)(\z-\g(t))}\,d\z
{\Bigr|}\\
&
{<\frac{s}{R-1-s}\leq\frac{r}{R-3}}
{\leq}\frac{4r}{R}.
\end{aligned}
$$
Thus
$|f(\g(t),{x})-f(z,x)|
<d(4d+4)^{d-1}\frac{4r}{R}=r$.

\v
Rouch\'e's theorem now implies that
{$\xi(t)\in B_{pr^{1/d}}(x)$ for all $t$, by continuity of $\xi$.}
In particular, setting $t=1$, we obtain
$$
{|\xi(1)-\xi(0)|}
=|x'-x|\leq pr^{1/d}\leq(2d+1)r^{1/d}
=(2d+1)|z'-z|^{1/d}.
$$
\end{proof}

{The same proof applies to any continuous choice of a root, even when roots coincide.}

\v
\noindent\textit{Remark:} If we choose $R$ to be any number $R>1$ and assume
$|a_j(z)|\leq A$ on ${B_R(0)^n}$, then a scaling argument
shows that (3.1) still holds with the constant $(2d+1)$ replaced by a
constant depending on $d$, $A$ and $R$.

\section{Bi-H\"older continuity of the distance function}
\label{sec:distance}

In this section we prove Theorem 4. First we recall the definition of a  {plurisubharmonic}
function on a singular variety. We say $\f\in\PSH(X,\o_0)$ {if, locally, $h+\f=\Phi|_X$, where $\o_0=\ddc h$,} where $\Phi$
is a PSH function on an open set in $\C^M$ and $\Phi|_X$ is the restriction of $\Phi$ to a local embedding
of $X$ in $\C^M$.

\medskip
\noindent\textit{Proof of Theorem 4.}
The problem is local so if we let $\C^N\sub\P^N$ be an affine open set, we
can, for the purpose of proving the theorem, replace $W$ by {a relatively compact neighborhood in $W\cap\C^N$}.
By the Noether normalization theorem, if $\C^n\sub\C^N$ is a generic vector subspace then
$\pi:{W\cap\C^N}\ra\C^n$ is a finite map in the sense of algebraic geometry. In particular, $\pi$ is proper
and if $d$ is the degree of $\pi$, then the fibers of $\pi$ contain exactly $d$ points counted with
multiplicity {in the sense of local mapping degree}. For example, if $X=\{x_1,x_2\in\C{}:x_1x_2=1\}$ then we can take $\C^1$ to be any
line other than $x_1=0$ or $x_2=0$. A point in $\C^N$ will be labelled $(z_1,\ldots,z_n,w_1,\ldots,w_{N-n})$
so $\pi$ is the projection onto the first $n$ coordinates.

\v
Let $Y=B_R(0)^n$ and let $B\sub Y$ be a subvariety containing the image of the singular set as
well as the branch locus. Let $Z=Y\setminus B$ and $X=\pi^{-1}(Z)\sub\C^N$. Then $\pi|_X:X\ra Z$ is a
covering space whose degree we denote by $d$.

\v
Use nested base balls $Y_1\ssub Y_2\ssub Y\ssub Y_{\mathrm{out}}$, with $\pi^{-1}(\overline Y)$ relatively compact in the affine chart. Distances downstairs use paths in $Y\setminus B$, and (4.6) concerns endpoints in $Y_2\setminus B$. The corresponding construction for normal K\"ahler spaces is given in Subsection~\ref{subsec:normal-kahler}.

\v
Recall that $\o_0=\o_{\FS}|_W$ and $\o_\f=\o_0+\ddc\f$. Let
$$
\eta_0=\pi_*\o_0\quad\text{and}\quad
\eta_\psi=\pi_*\o_\f=\eta_0+\ddc\psi
$$
and
$$
\o_\psi=\pi^*\eta_\psi=
{\pi^*\eta_0}+\ddc(\psi\circ\pi).
$$
Then $\eta_0,\eta_\psi$ are positive $(1,1)$ currents on $Y$ which are smooth K\"ahler metrics on $Z$, and
$\o_\psi$ is a positive $(1,1)$ current which is smooth on $X$. Moreover, for $z\in Z$ we have
\be\tag{4.1}
\psi(z)=\s_{j=1}^d\f(x_j)\quad\text{where}\quad\pi^{-1}(z)=\{x_1,\ldots,x_d\}.
\ee
If $z\in Y$, then (4.1) still holds if we count the points in $\pi^{-1}(z)$ with multiplicity. In
particular, $\eta_0$ and $\eta_\psi$ are positive $(1,1)$ currents on $Y=B_R(0)^n$ with continuous local
potentials whose restrictions to $Z\sub Y=B_R(0)^n$ are smooth K\"ahler metrics. In the next
lemma we estimate the H\"older continuity of these potentials.

\v
If $\o_0=\ddc h$, these continuous potentials are $\psi_0=\pi_*h$ and
$F=\pi_*(h+\f)$; continuity follows by conservation of local degree.
On $Z$, $F$ is a sum of psh functions composed with the local inverse
branches of $\pi$. Its continuous extension across $B$ is therefore psh,
and $\ddc F=\eta_\psi$. The inverse-branch formula
$\eta_\psi=\s_j s_j^*\o_\f$ gives $\o_\psi\ge\o_\f$ on $X$.

\begin{lemma}\label{lem:trace-holder}
There is a constant $C>0$ such that for every $z,z'\in Y$,
\be\tag{4.2}
|\psi(z')-\psi(z)|+|F(z')-F(z)|\le C|z'-z|^{\al/d}.
\ee
The constant depends only on the fixed projection data, $\al$, and the
ambient $C^\al$ bound of $\f$.
\end{lemma}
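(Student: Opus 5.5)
The plan is to compare the fibers over $z$ and $z'$ by lifting a short path and then controlling the root displacement with Lemma~\ref{lem:root}. Both $\psi$ and $F$ are sums over fibers: $\psi(z)=\s_j\f(x_j)$ and $F(z)=\s_j (h+\f)(x_j)$. Here $h+\f$ and $\f$ are ambient $C^\al$ on $\pi^{-1}(\overline Y)$, since $h$ is smooth and $\f\in C^\al(W)$. So it suffices to prove the following matching statement. For $z,z'\in Y$ one can enumerate the fibers (with multiplicity) as $\{x_j\}$ and $\{x'_j\}$ so that
$$
|x_j-x'_j|\le C|z'-z|^{1/d}\quad\text{for every } j.
$$
Summing the $C^\al$ bound over $j$ then gives
$$
|\psi(z')-\psi(z)|+|F(z')-F(z)|\le 2dC|z'-z|^{\al/d}.
$$
Large separations, $|z'-z|\ge c_0$, are trivial, because $\psi$ and $F$ are bounded on $\overline Y$.

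To get the matching I would first work with $z,z'\in Z$. Each fiber coordinate $w_k$ (or a generic linear combination $\ell=\s c_kw_k$) is integral over $\mathcal O(Y_{\mathrm{out}})$ by finiteness of $\pi$. Its characteristic polynomial $f_\ell(z,x)=\p_j(x-\ell(x_j))$ has holomorphic coefficients, bounded on a larger polydisc. After rescaling, the Remark following Lemma~\ref{lem:root} applies with a constant depending on $d$ and the bound.

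Take $\g$ to be the straight segment from $z$ to $z'$, perturbed slightly to avoid $B$ (a real-codimension-two set), while keeping $|\g(t)-z|\le|z'-z|$ up to an arbitrarily small error. Lifting $\g$ through the covering $X\ra Z$ gives a bijection $x_j\mapsto x'_j$ of fibers. Lemma~\ref{lem:root} (in the form "any continuous choice of root") applied to each of the finitely many coordinate functions $w_k$ then yields $|w_k(x_j)-w_k(x'_j)|\le C|z'-z|^{1/d}$. Combined with $|z-z'|\le|z-z'|^{1/d}$ for small separations, this gives the matching estimate. The branch point issue is harmless here because the lemma's proof only uses the base-path bound. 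The perturbation error is removed by letting it tend to zero, using continuity of $\psi$ and $F$ on $Z$.

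For general $z,z'\in Y$ I would approximate by points of $Z$ and use continuity of $\psi$ and $F$ on $Y$, which was already noted via conservation of local degree. The main obstacle is this step: making sure the multiplicity-weighted fiber sums are continuous across $B$, so the estimate on the dense set $Z$ passes to the limit. A cleaner alternative is to avoid lifting altogether. One applies Rouch\'e as in the proof of Lemma~\ref{lem:root} directly to $f_\ell(z,\cdot)$ and $f_\ell(z',\cdot)$ along the segment, getting that the root multisets are within $(2d+1)|z'-z|^{1/d}$ in the matching sense. Then one chooses the generic linear forms $\ell$ so that matching in finitely many projections forces matching of the actual points, up to a constant depending on the projection data. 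I would try the lifting argument on $Z$ plus density first.
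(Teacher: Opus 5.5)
Your proposal is correct and follows essentially the same route as the paper: lift one short base path avoiding $B$, then apply the continuous-root form of Lemma~\ref{lem:root} to each coordinate polynomial $P_\n$ to pair the fibers with $|x_j-x'_j|_{\euc}\le C|z'-z|^{1/d}$. After that you sum the ambient $C^\al$ bounds for $\f$ and $h+\f$, extend across $B$ by continuity, and handle large separations by bounded oscillation, exactly as the paper does.

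The only difference is cosmetic. The paper uses a broken segment through a generic point within $r/4$ of the midpoint, which satisfies hypothesis (3) of Lemma~\ref{lem:root} exactly. Your slightly perturbed segment instead needs the remark that the proof of that lemma only uses $\max_t|\g(t)-z|$. With that remark the perturbation error merely enlarges the constant, so continuity of $\psi$ and $F$ is not what removes it.
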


\begin{proof}
We use the notation $\ell_{\eta_\psi}(\g)$ to denote the length of a smooth curve {$\g$ on $Z$} with
respect to the $\eta_\psi$ metric. If $\g:[0,1]\ra Z$ is smooth then {its $d$ lifts satisfy}
\be\tag{4.3}
\max_{1\le j\le d}\ell_{\o_\f}(\l_j)
\le\ell_{\eta_\psi}(\g)
\le\s_{j=1}^d\ell_{\o_\f}(\l_j)
\le\sqrt d\,\ell_{\eta_\psi}(\g).
\ee
Here the $\{\l_1,\ldots,\l_d\}$ are the lifts of $\g$ to the points $x_1,\ldots,x_d$. In particular, $\ell_{\o_\psi}(\l_p)$ is
independent of $p$ and $\o_\psi{\ge}\o_\f$.

\v
Now let $\g(t)$ be a curve satisfying the hypotheses of Lemma 1. Let $z=\g(0)$ and $z'=\g(1)$.
Order the set $\pi^{-1}(z')=\{x'_1,\ldots,x'_d\}$ so that $x_j,x'_j$ are the endpoints of $\l_j$.

\v
For $r=|z'-z|>0$ below the fixed local scale, choose a generic midpoint
$a$ within $r/4$ of $(z+z')/2$ in the norm of Section 3, and use the
broken segment $[z,a]\cup[a,z']$. It stays within $|\z-z|\le r$;
almost every such midpoint gives segments avoiding $B$. The coordinate
polynomials
$$
P_\n(z,T)=\p_{j=1}^d(T-w_\n(x_j))
$$
have bounded holomorphic coefficients extending across $B$. Thus the
scaled form of Lemma~\ref{lem:root} applies coordinatewise to the lifts
of this one path, including when a coordinate polynomial has repeated roots.
Equivalence of norms on the fixed ambient space gives
$|x'_j-x_j|_{\euc}\le C r^{1/d}$ for every $j$.

\v
Use the ambient H\"older assumption directly. Since $h$ is smooth on a
neighborhood of the compact set under consideration, both $\f$ and
$h+\f$ have bounded ambient $C^\al$ seminorms. Hence
\be\tag{4.4}
\begin{aligned}
|\psi(z')-\psi(z)|
&=\left|\s_{j=1}^d\f(x'_j)-\s_{j=1}^d\f(x_j)\right|\\
&\le\s_{j=1}^d|\f(x'_j)-\f(x_j)|
\le C\s_{j=1}^d|x'_j-x_j|_{\euc}^{\al},
\end{aligned}
\ee
and the same calculation with $h+\f$ gives
\be\tag{4.5}
|\psi(z')-\psi(z)|+|F(z')-F(z)|\le C|z'-z|^{\al/d}.
\ee
Continuity extends this inequality across $B$. Bounded oscillation
handles pairs whose separation exceeds the fixed local scale.

\end{proof}

We will also use the following local intrinsic--extrinsic comparison,
a consequence of the subanalytic H\"older comparison theorem of
\textLstroke{}ojasiewicz [5]. It identifies the background completion
and permits comparison with ambient endpoint distances; it is not needed
in the exponent calculation (4.5).

\begin{theorem}\label{thm:background}
Let $Y\sub\C^N$ be {a normal complex analytic space. Around each point choose sufficiently small neighborhoods $U'\ssub U$ for which $U\cap Y^{\reg}$ is connected}.
Let $\o$ {be} the euclidean metric on $\C^N$ and $\o_0$ its restriction to $Y$.
Then there exists $C>0$ and $\b\in(0,1)$ such that for all $x,x'\in{U'\cap Y^{\reg}}$ we have
$$
d_0(x,x'):=d_{\o_0}(x,x')\le C|x'-x|^\b,
$$
{where the distance on the left is measured by curves in $U\cap Y^{\reg}$.}
\end{theorem}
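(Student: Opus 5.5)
Since the statement is local, the plan is to reduce it to the subanalytic H\"older comparison of \L{}ojasiewicz applied to the geodesic distance on a neighborhood of a fixed point $y_0\in Y$. Fix a small ball $U=Y\cap B_\rho(y_0)$ with $U\cap Y^{\reg}$ connected; normality makes $Y$ locally irreducible, so this holds for small $\rho$. The set $U$ is relatively compact and semianalytic in $\C^N$. On $\overline{U'}\times\overline{U'}$ I would consider the intrinsic distance $\delta(x,x')=d_0(x,x')$, measured by curves in $U\cap Y^{\reg}$. I then want to show two things: $\delta$ is finite and bounded on $U'\cap Y^{\reg}$, and it extends continuously to $\overline{U'}\times\overline{U'}$, vanishing exactly on the diagonal. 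Once these hold, $\delta$ and $|x-x'|$ are two continuous functions on a compact set with the same zero set. The \L{}ojasiewicz inequality then yields $\delta\le C|x-x'|^\b$, provided $\delta$ is subanalytic, or at least bounded above by a subanalytic function with the same zero set.

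Since $\delta$ itself is not obviously subanalytic, the main work is to produce a subanalytic majorant. I would do this with a subanalytic (in fact semialgebraic-style) triangulation, or equivalently a Whitney stratification with the Lipschitz-connectedness property of subanalytic sets. By the theorem of Kurdyka--Parusi\'nski on L-regular decompositions, every compact subanalytic set is connected by subanalytic arcs of length at most $C|x-x'|^{\b}$ within it. A more elementary route uses a finite subanalytic triangulation of $\overline U$ in which $Y^{\sing}\cap\overline U$ is a subcomplex. In that route one joins $x,x'$ by arcs in the open simplices of top dimension, crossing codimension-one faces, which lie generically in $Y^{\reg}$ because $\operatorname{codim}_{\R} Y^{\sing}\ge2$. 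The lengths of such arcs are controlled by the H\"older bi-Lipschitz quality of the triangulation homeomorphism; subanalytic homeomorphisms are H\"older by \L{}ojasiewicz.

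Concretely, the key steps are as follows.
\begin{enumerate}
\item Choose a subanalytic triangulation $\Phi:|K|\to\overline U$ compatible with $Y^{\sing}$. Since $\Phi^{-1}$ is subanalytic and continuous on a compact set, it is H\"older: $|\Phi^{-1}(x)-\Phi^{-1}(x')|\le C|x-x'|^{\b_1}$.
\item In $|K|$, join the preimages by a piecewise linear path of length $\le C|\Phi^{-1}x-\Phi^{-1}x'|$. This path stays in top-dimensional simplices and codimension-one faces away from $\Phi^{-1}(Y^{\sing})$, which is possible after a small perturbation because the singular subcomplex has real codimension $\ge2$ and $U\cap Y^{\reg}$ is connected.
\item Bound the $\o_0$-length of the image path. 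The difficulty here is that $\Phi$ need not be Lipschitz near the singular set. To handle it, I would replace length bounds by a curve-selection argument. The function $\sigma(x,x')=\inf\{\ell(\g):\g$ a subanalytic arc in $U\cap Y^{\reg}\cup\{x,x'\}$ joining $x,x'\}$ is defined via subanalytic data. If necessary I would work in the o-minimal structure $\R_{an}$, where definable families of arcs have definable length functions, so that $\sigma$ is definable.
\item Show that $\sigma$ is continuous and vanishes only on the diagonal. For this, use local conic structure: near any $x_0\in\overline{U'}$, the set $Y\cap B_s(x_0)$ is definably homeomorphic to a cone. Radial arcs to the vertex have length $O(s)$, so $\sigma(x,x')\to0$ as $x,x'\to x_0$.
\item Apply the \L{}ojasiewicz inequality in $\R_{an}$ to conclude $\delta\le\sigma\le C|x-x'|^\b$.
\end{enumerate}

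The main obstacle is step 4. It requires showing that the length of definable radial arcs in the conic structure tends to zero with the size of the ball, uniformly in $x_0$, and that these arcs can be pushed off $Y^{\sing}$ at arbitrarily small cost in length. I would first try the definable Lipschitz-cone theorem (Valette), which gives a definably bi-Lipschitz conic structure. With it the radial arcs have length $\le Cs$, and the perturbation off the singular set uses codimension $\ge2$ together with local connectedness of $Y^{\reg}$, which follows from normality.
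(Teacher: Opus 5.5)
Your proof breaks at step 3, and step 5 falls with it. Your framework needs a majorant of $\delta$ that is definable in a polynomially bounded structure, and $\sigma$ is not one.

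\begin{itemize}
\item $\sigma$ is an infimum over \emph{all} subanalytic arcs. These do not form a definable family, since their complexity is unbounded, so no closure property of $\mathbb{R}_{\mathrm{an}}$ makes $\sigma$ definable.
\item Worse, $U\cap Y^{\reg}$ is a real-analytic manifold. Piecewise smooth curves in it can be replaced by piecewise analytic (hence subanalytic) arcs of almost the same length. So $\sigma=\delta$ at regular pairs, and step 3 merely renames the geodesic distance whose definability you rightly doubted at the outset. The argument is circular.
\item The justification you give is also false: lengths in definable families are not $\mathbb{R}_{\mathrm{an}}$-definable. The semialgebraic family $s\mapsto(s,2\sqrt{ts})$, $0\le s\le1$, has length $\sqrt{1+t}+t\operatorname{arcsinh}(t^{-1/2})=1+\frac t2\log\frac1t+O(t)$, which has no Puiseux expansion.
\item Such lengths involve logarithms (Comte--Lion--Rolin). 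They are definable in $\mathbb{R}_{\mathrm{an},\exp}$, but that structure is not polynomially bounded, so the power-type \textLstroke{}ojasiewicz inequality of step 5 is not available there in general.
\end{itemize}

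The paper does not derive this statement. It simply quotes what it calls \textLstroke{}ojasiewicz's subanalytic H\"older comparison, i.e.\ the H\"older bound for the inner distance of compact connected subanalytic sets (Whitney $p$-regularity). That is essentially your opening remark, and it, rather than steps 1--5, is the real input.

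To keep your framework you need a genuine definable majorant. One source is the Kurdyka--Orro theorem: the geodesic distance of a connected subanalytic set such as $U\cap Y^{\reg}$ is $(1+\varepsilon)$-equivalent to a subanalytic distance $d_\varepsilon$. The Puiseux expansion of $r\mapsto\sup\{d_\varepsilon(x,x'):x,x'\in U'\cap Y^{\reg},\ |x-x'|\le r\}$ then gives the power bound, provided this function tends to $0$.

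That qualitative fact is where normality enters, and step 4 does not yet prove it. Local irreducibility makes the regular part of the link connected, but connectedness gives no length bound. What you need is curves in $Y^{\reg}$ of length $O(s)$ between regular points at scale $s$ around $x_0$. This requires two further inputs:
\begin{itemize}
\item The regular part of the link must have finite inner diameter. This holds for a connected relatively compact subanalytic set, via an L-regular decomposition and curve selection, but it must be argued.
\item You need a Lipschitz conic structure compatible with $Y^{\sing}$ to transport that bound to scale $s$.
\end{itemize}
The same confinement issue arises if you simply cite Whitney $p$-regularity, since the arcs it supplies may run through $Y^{\sing}$.
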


On relatively compact charts, a smooth ambient K\"ahler metric is
uniformly equivalent to the Euclidean metric, so Theorem~\ref{thm:background}
applies to its intrinsic distance as well. The reverse local bound
$|x-x'|\le C d_0(x,x')$ follows by comparing the length of a curve with
its ambient displacement. Normality ensures local irreducibility and
connectedness of the regular locus in sufficiently small neighborhoods.
Consequently, regular points approaching the same point of the analytic
space form a single Cauchy class for $d_0$; distinct points give distinct
classes. These local comparisons identify the background completion of
a compact normal space with the space itself. In particular, that
completion is compact and has finite diameter.

\begin{lemma}\label{lem:base-distance}
There exists $C>0$ such that
\be\tag{4.6}
d_{\eta_\psi}(z',z)\le C|z'-z|^\tau,
\qquad \tau=\frac{\al}{2d},
\ee
for all $z,z'\in Y_2\setminus B$, with paths measured in $Y\setminus B$.
\end{lemma}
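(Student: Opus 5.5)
The plan is to rerun the averaging argument from the proof of Theorem 3 on the base $Y$, now for the metric $\eta_\psi=\eta_0+\ddc\psi$, which has a H\"older potential by Lemma~\ref{lem:trace-holder}. On $Y$ we have $\eta_\psi=\ddc F$ with $F$ continuous and psh, and by (4.2) $F$ is H\"older with exponent $\al/d$ on all of $Y$, across $B$ as well. Since $\eta_\psi$ is a current on $Y$ that is smooth only on $Z$, the key observation is that the Laplacian computation in Theorem 3 only needs the trace inequality $\eta_\psi(v,\bar v)\le |v|^2\,\tr_{\euc}\eta_\psi$ along segments, together with integration by parts against a smooth cutoff. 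Both make sense for the positive measure $\D_{\euc}F=\tr_{\euc}\eta_\psi$ on $Y$.

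Fix $z,z'\in Y_2\setminus B$ with $r=|z'-z|$ below the local scale, set $\e=1/4$, and let $m$ be the midpoint. For $x\in B_{r/4}(m)$, consider the broken segment $\eta_x=[z,x]\cup[x,z']$. Choosing $Y_2\ssub Y$ keeps these segments, and the cutoff supports, inside $Y$.

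First I check that for almost every $x$ the segments $[z,x]$ and $[x,z']$ meet $B$ at most in a set that does not matter. The set of directions from $z$ whose ray meets the proper subvariety $B$ in positive one-dimensional measure is null, so for a.e.\ $x$ each segment meets $B$ in a finite set. More carefully, I will perturb $x$ to make the segments avoid $B$ entirely, as is done in the proof of Lemma~\ref{lem:trace-holder}; this is possible since $B$ has real codimension $2$ and $\dim_\R$ of the family of segments exceeds what is needed. Paths therefore lie in $Y\setminus B$.

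Next I apply the chain of inequalities (2.2) with $\f$ replaced by $F$ minus a smooth local potential of $\o_{\euc}$, weighting by $\r(x)^2$. The only change is that $\int\D_{\euc}F(tx)\r(x)\,dx$ is interpreted as the pairing of the measure with $\r$, which equals $t^{-2}\int (F(tx)-F(0))\D\r\,dx$ by the definition of distributional derivatives. The term $\int_0^1 (\cdot)^{1/2}dt$ is then handled by Fubini and Cauchy--Schwarz, which is valid because the integrand over $B$ has measure zero in $x$ for a.e.\ $t$.

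Feeding in $|F(tx)-F(0)|\le C|tx|^{\al/d}$ gives an average length of at most $C r^{\al/(2d)}$. Hence some admissible $x$ yields $d_{\eta_\psi}(z,z')\le C|z'-z|^{\al/(2d)}$. For $r$ above the local scale, I chain boundedly many short steps inside $Y_2$, or use the finite diameter of $Y_2\setminus B$ obtained the same way.

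The main obstacle is justifying the line-integral inequality when $\eta_\psi$ is only a current across $B$. Concretely, the issue is that $\int_0^1(\tr\eta_\psi)(tx)\,dt$ restricted to a segment must be controlled by the distributional mass. I would handle this by first regularizing: take $F_\d=F*\chi_\d$, which is smooth psh with the same H\"older bound. The argument then gives the estimate for $\ddc F_\d$ along a.e.\ segment. Since $\ddc F_\d\to\eta_\psi$ smoothly on compact subsets of $Z$, and for a.e.\ $x$ the segments are compact in $Z$, Fatou's lemma applied to the $x$-averages passes the bound to the limit.
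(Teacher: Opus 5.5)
Your proposal is correct and follows essentially the paper's route. You rerun the broken-segment averaging of Theorem 3 with the psh potential $F$, which is $C^{\alpha/d}$ by Lemma~\ref{lem:trace-holder}; you justify the integration by parts distributionally, and your mollification-plus-Fatou step is a clean way to make precise the paper's remark that the smooth trace density off $B$ is dominated by the full trace measure; you use that almost every broken segment avoids $B$; and you subdivide for large separations. The one point to tighten is the avoidance step: rather than ``perturbing'' $x$, note that the cones over $B$ from $z$ and from $z'$ have real dimension at most $2n-1$, so admissible midpoints form a full-measure set, and that is what lets you pick one whose length is at most the average.
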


\begin{proof}
The proof is the same averaging argument as in Theorem 3. Put
$\k=\al/d$. By Lemma~\ref{lem:trace-holder}, $F=\psi_0+\psi$ is a
$C^\k$ psh function and $\eta_\psi=\ddc F$. In a small Euclidean ball,
write
$$
\eta_\psi=\o_{\euc}+\ddc(F-|z|^2).
$$
The potential $F-|z|^2$ has a bounded $C^\k$ seminorm. Thus (2.1)
holds with the smooth Euclidean background, even though $\eta_\psi$
itself need not be smooth along $B$.

\v
For endpoints at Euclidean distance $s>0$, average over broken segments
through a ball of radius $s/8$ about their midpoint. After translating
the first endpoint to $0$, the function $x\mapsto F(tx)$ is bounded
and psh for each $0<t\le1$. Integration by parts in (2.2) is valid in the
sense of distributions. The smooth trace density off $B$ is bounded
by the full positive trace measure of $\ddc F$, so the same calculation
gives
$$
\frac1{\vol B_{s/8}}\I_{B_{s/8}}
\ell_{\eta_\psi}(\eta_x)\,dx
\le C\bigl(s+s^{\k/2}\bigr)
\le C s^{\k/2}
$$
when $s\le1$. The integral in the parameter is
$\I_0^1t^{\k/2-1}\,dt=2/\k<\i$.
Almost every broken segment avoids the analytic set $B$, and choosing
one with length bounded by the average proves the estimate below a
fixed scale. Subdivision inside $Y$ proves it for all endpoints in
$Y_2\setminus B$, with a larger constant. This gives
$\tau=\k/2=\al/(2d)$.
\end{proof}

The inequality (4.6) is nearly what we want; we just want to lift this estimate back up to
an estimate on $X$, i.e.\ we want to use (4.6) to show that
\be\tag{4.7}
d_{\o_\f}\le Cd_{\mathrm{Euc}}^{\d_{\mathrm{ext}}}
\ee
for some $\d_{\mathrm{ext}}>0$. We will first obtain the intrinsic exponent $\tau=\al/(2d)$; the comparison with ambient distance is a further consequence. It suffices to show
\be\tag{4.8}
d_{\o_\psi}(x_0,x_1)\le Cd_{\mathrm{Euc}}(x_0,x_1)^{\d_{\mathrm{ext}}}
\ee
where $\o_\psi=\pi^*\pi_*\o_\f$ (since $\o_\psi{\ge}\o_\f$). Here $d_{\mathrm{Euc}}$ is the euclidean metric on {the chosen relatively compact projection neighborhood} and
by abuse of notation, we use the notation $\psi$ to denote $\psi\circ\pi$.

\medskip
\noindent\textit{Proof of Theorem 4.}

Take $x_0\in\pi^{-1}(Y_1)\cap X$ and $d_{\o_0}(x_0,x_1),\e$ small, so that the short curves below project into $Y_2$. Curves in $W^{\reg}$ with endpoints outside $\pi^{-1}(B)$ may be slightly perturbed to avoid this set, keeping their endpoints fixed and increasing their $\o_0$-length arbitrarily little.

\v
Let $x_0,x_1\in X$ and $x_t=\l(t)$ a curve joining $x_0$ to $x_1$ such that
$$
\ell_{\o_0}(\l)\leq d_{\o_0}(x_0,x_1)+\e.
$$
We call $\l$ an $(\o_0,\e)$-geodesic.
{Choose $\l$ piecewise smooth.}

\v
Let $z_j=\pi(x_j)$, and $z(t)=\pi\circ\l(t)$. Then $z(t)$ joins $z_0$ to $z_1$ and clearly $z(t)$ lifts to $\l(t)$.
{Write $\g=z$ and $\pi^{-1}(z_0)=\{x_0^1,\ldots,x_0^d\}$, with $x_0^1=x_0$.}
For $s<t$, let $\l[s,t]$ (resp. $z[s,t]$) be $\l$ (resp. $z$) restricted to the interval $[s,t]\cap[0,1]$.
{For $s>t$ use the reversed subarc.}

\v
For $1\leq k\leq d$ we define $S_k{\sub}[0,1]$ as follows: $t\in S_k$ if there exist a sequence of $(\eta_\psi,1/j)$-geodesics $\xi_j[0,t]:[0,t]\ra Z$ joining $z_0$ to $z_t=\pi(x_t)$ with the following property: the closed curve
$$
\qquad
{\xi_j[0,t]^{-1}\circ\g[0,t]}
$$
lifts to a curve in $X$ joining $x_0^1$ to $x_0^k$.

\v
Equivalently, the lift of $\xi_j[0,t]$ from $x_0^k$ ends at $\l(t)$.

\v
The $S_k$ cover $[0,1]$: lift reversed approximate geodesics from $\l(t)$ and select a subsequence ending at one fixed fiber point. Each $S_k$ is closed, since appending $\g[s,t]$ to one of the paths defining $s\in S_k$ adds an error tending to zero as $s\ra t$; hence every nonempty $S_k$ contains its supremum. Finally, $S_1$ contains an initial interval, by minimizing geodesics in an evenly covered normal neighborhood of $z_0$.

\v
Let $m$ be a positive integer with $m\leq d$. We define real numbers $0=t_0<t_1<\cdots<t_m=1$ as follows.
Let $t_0=0$ and $t_1=\sup S_1$. If $t_1=1$ then we set $m=1$. If $t_1<1$ then, after possibly reordering {the unused fiber points $x_0^2,\ldots,x_0^d$}, we may assume that $S_2\cap[t_1,1]$ has $t_1$ as an accumulation point. Let $t_2=\sup S_2$. If $t_2=1$ then let $m=2$. Otherwise, we continue this process until $t_m=1$.

\v
The finite cover supplies an accumulation set from the right at each $t_k<1$. It is unused because all earlier suprema are at most $t_k$; thus the suprema strictly increase and reach $1$ after at most $d$ steps.

\v
Fix $1\leq k\leq m-1$. Choose $\e_{k,j}\ra0$ such that $t_k+\e_{k,j}\in S_{k+1}$ and choose
{$\xi_{k,j}[0,t_k]$ and $\xi_{k+1,j}[0,t_k+\e_{k,j}]$}
to be $(\eta_\psi,1/j)$ approximate geodesics satisfying:
\begin{enumerate}
\item They join $z_0$ to $z(t_k)$ and $z_0$ to $z(t_k+\e_{k,j})$ respectively.
\item

{$\xi_{k,j}[0,t_k]^{-1}\circ\g[0,t_k]$}
lifts to a curve in $X$ joining $x_0^1$ to $x_0^k$.
\item

{$\xi_{k+1,j}[0,t_k+\e_{k,j}]^{-1}\circ\g[0,t_k+\e_{k,j}]$}
lifts to a curve in $X$ joining $x_0^1$ to $x_0^{k+1}$ (possible since $t_k+\e_{k,j}\in S_{k+1}$).
\end{enumerate}

Choose these paths independently at their indicated endpoints, with $0<\e_{k,j}<t_{k+1}-t_k$, and choose $\xi_{m,j}[0,1]$ using $1\in S_m$.

\v
Set $\e_{m,j}=0$ and
$$
\d_j=\s_{k=1}^{m-1}
 \ell_{\eta_\psi}\bigl(\g[t_k,t_k+\e_{k,j}]\bigr)\lra0.
$$

\v
In Figure 1, the red dots represent $z(t_0),z(t_1),z(t_2),z(t_3),\ldots,z(t_m)$, the blue dots represent
{$z(t_1+\e_{1,j}),z(t_2+\e_{2,j}),\ldots$}
and the black dots are the elements of $B$ (and thus don't intersect the dotted curves).
The choice of
{the paths and $\e_{k,j}$}
implies that the boundaries of the yellow regions in Figure 2 lift to closed curves on $X$.

\v
Put, with product factors ordered from $k=m-1$ on the left to $k=1$ on the right,
$$
\begin{aligned}
\t_j:={}&\left(\p_{k=m-1,\ldots,1}
 \g[t_{k+1},t_{k+1}+\e_{k+1,j}]
 \circ\xi_{k+1,j}[0,t_{k+1}]
 \circ\xi_{k+1,j}[0,t_k+\e_{k,j}]^{-1}\right)\\
&\circ\g[t_1,t_1+\e_{1,j}]\circ\xi_{1,j}[0,t_1],
\end{aligned}
$$
and let $\T_j$ be its lift from $x_0$ (the product is empty if $m=1$). After the first path and connector the lift is at $\l(t_1+\e_{1,j})$; each product factor then lifts through $x_0^{k+1}$ to $\l(t_{k+1}+\e_{k+1,j})$. The final endpoint is therefore $\l(1)=x_1$.

\begin{figure}[ht]
\centering
\resizebox{\linewidth}{!}{%
\begin{tikzpicture}[
  x=0.83cm,y=0.83cm,
  line cap=round,line join=round,
  font=\small,
  outgoing/.style={postaction={decorate},decoration={markings,
    mark=at position .64 with {\arrow[green!55!black]{Triangle[length=3mm,width=2.5mm]}}}},
  returning/.style={postaction={decorate},decoration={markings,
    mark=at position .70 with {\arrowreversed[green!55!black]{Triangle[length=3mm,width=2.5mm]}}}},
  arc/.style={blue,line width=.9pt,dash pattern=on 4pt off 2.5pt}
]
\path[use as bounding box] (-.45,-1.15) rectangle (14.1,5.1);
\fill[white] (-.45,-1.15) rectangle (14.1,5.1);
\draw[black,line width=1pt] (0,0)--(14,0);

\draw[arc,outgoing] (0,0) .. controls (1.5,1.08) and (3.5,1.36) .. (3.3,0);
\draw[arc,returning] (0,0) .. controls (1.65,1.38) and (3.62,1.63) .. (3.85,0);
\draw[arc,outgoing] (0,0) .. controls (2.50,2.35) and (5.65,3.08) .. (5,0);
\draw[arc,returning] (0,0) .. controls (2.85,2.76) and (5.80,3.30) .. (5.55,0);
\draw[arc,outgoing] (0,0) .. controls (3.90,4.45) and (8.05,5.90) .. (8.05,0);
\draw[arc,returning] (0,0) .. controls (4.0,4.82) and (8.2,6.12) .. (8.60,0);
\draw[red,line width=.9pt,dash pattern=on 4pt off 2.5pt,outgoing]
  (0,0) .. controls (4.8,6.3) and (8.85,5.85) .. (13.2,0);

\foreach \x/\y in {3.02/.90,3.62/1.84,5.62/1.98,6.68/1.92,6.28/1.04,9.32/2.08,10.45/2.08}
  \fill[black] (\x,\y) circle[radius=1.9pt];
\foreach \x in {0,3.3,5,8.05,13.2}
  \fill[red] (\x,0) circle[radius=2.5pt];
\foreach \x in {3.85,5.55,8.60}
  \fill[blue] (\x,0) circle[radius=2.5pt];

\node[red,below=5pt] at (0,0) {$z_0$};
\node[red,below=5pt] at (3.3,0) {$z(t_1)$};
\node[red,below=5pt] at (5,0) {$z(t_2)$};
\node[red,below=5pt] at (8.05,0) {$z(t_3)$};
\node[red,below=5pt] at (13.2,0) {$z(t_m)$};
\node[blue] at (3.55,-.86) {$z(t_1+\e_{1,j})$};
\node[blue] at (6.02,-.86) {$z(t_2+\e_{2,j})$};
\node[blue] at (9.02,-.86) {$z(t_3+\e_{3,j})$};
\node[blue] at (6.35,2.73) {$\xi_{3,j}[0,t_3]$};
\node[blue,font=\footnotesize] at (9.60,2.50) {$\xi_{4,j}[0,t_3+\e_{3,j}]$};
\end{tikzpicture}
}
\caption{}
\end{figure}
\begin{figure}[ht]
\centering
\resizebox{\linewidth}{!}{%
\begin{tikzpicture}[x=.70cm,y=.70cm,
  figure path/.style={draw=figureblue,line width=.9pt,
    dash pattern=on 3pt off 2pt},
  outward/.style={postaction={decorate},decoration={markings,
    mark=at position .64 with {\arrow[figuregreen]{Triangle[length=2.6mm,width=2.5mm]}}}},
  inward/.style={postaction={decorate},decoration={markings,
    mark=at position .66 with {\arrow[figuregreen]{Triangle[reversed,length=2.6mm,width=2.5mm]}}}},
  every node/.style={font=\small,inner sep=1pt}]
  \definecolor{figureblue}{RGB}{12,68,217}
  \definecolor{figurered}{RGB}{239,38,29}
  \definecolor{figuregreen}{RGB}{0,132,36}
  \definecolor{figureyellow}{RGB}{255,250,180}

  \fill[white] (-.55,-1.23) rectangle (17.65,5.70);

  \fill[figureyellow]
    (0,0) .. controls (1.6,.77) and (3.5,1.38) .. (3.5,0)
    -- cycle;
  \fill[figureyellow]
    (0,0) .. controls (3.6,2.50) and (6.9,3.20) .. (6.4,0)
    -- (4.1,0)
    .. controls (4.1,1.65) and (1.7,1.20) .. (0,0) -- cycle;
  \fill[figureyellow]
    (0,0) .. controls (6.3,6.03) and (10.6,5.98) .. (10.6,0)
    -- (7,0)
    .. controls (7,3.65) and (3.5,2.97) .. (0,0) -- cycle;
  \fill[figureyellow]
    (0,0) .. controls (6.35,7.80) and (12.1,6.60) .. (17,0)
    -- (11.2,0)
    .. controls (10.8,6.68) and (6.4,6.25) .. (0,0) -- cycle;

  \draw[figure path,outward]
    (0,0) .. controls (1.6,.77) and (3.5,1.38) .. (3.5,0);
  \draw[figure path,inward]
    (0,0) .. controls (1.7,1.20) and (4.1,1.65) .. (4.1,0);
  \draw[figure path,outward]
    (0,0) .. controls (3.6,2.50) and (6.9,3.20) .. (6.4,0);
  \draw[figure path,inward]
    (0,0) .. controls (3.5,2.97) and (7,3.65) .. (7,0);
  \draw[figure path,outward]
    (0,0) .. controls (6.3,6.03) and (10.6,5.98) .. (10.6,0);
  \draw[figure path,inward]
    (0,0) .. controls (6.4,6.25) and (10.8,6.68) .. (11.2,0);
  \draw[figure path,outward]
    (0,0) .. controls (6.35,7.80) and (12.1,6.60) .. (17,0);

  \draw[black,line width=1.1pt] (0,0)--(17.6,0);
  \foreach \x in {0,3.5,6.4,10.6,17}
    \fill[figurered] (\x,0) circle[radius=2.4pt];
  \foreach \x in {4.1,7,11.2}
    \fill[figureblue] (\x,0) circle[radius=2.4pt];

  \node[anchor=north] at (0,-.23) {$z_0$};
  \node[anchor=north] at (3.5,-.23) {$z(t_1)$};
  \node[anchor=north] at (6.4,-.23) {$z(t_2)$};
  \node[anchor=north] at (10.6,-.23) {$z(t_3)$};
  \node[anchor=north] at (17,-.23) {$z(t_m)$};
  \node[anchor=north] at (4.1,-.75) {$z(t_1+\e_{1,j})$};
  \node[anchor=north] at (7,-.75) {$z(t_2+\e_{2,j})$};
  \node[anchor=north] at (11.2,-.75) {$z(t_3+\e_{3,j})$};
  \pgfresetboundingbox
  \path[use as bounding box] (-.55,-1.23) rectangle (17.65,5.70);
\end{tikzpicture}
}
\caption{}
\end{figure}

Now (4.6) implies
$$
\begin{aligned}
d_{\eta_\psi}(z_0,z_t)
&\leq C|z_t-z_0|^\tau
 \leq C\ell_{\o_0}(\l[0,t])^\tau\\
&\leq C\bigl(d_{\o_0}(x_0,x_1)+\e\bigr)^\tau
 =:L_\e.
\end{aligned}
$$
By (4.3), the $2m-1$ approximate geodesics and the connectors give
$$
d_{\o_\f}(x_0,x_1)
\leq\ell_{\o_\f}(\T_j)
\leq(2m-1)(L_\e+1/j)+\d_j.
$$
First let $j\ra\i$ with $\l$ fixed, then let $\e\ra0$. Since $m\leq d$,
$$
d_{\o_\f}(x_0,x_1)\leq (2d-1)C\,d_{\o_0}(x_0,x_1)^\tau.
$$
Here $\tau=\al/(2d)$, so this is the required local intrinsic estimate.
The same estimate holds for $d_{\o_\psi}$ because
$\o_\psi=\pi^*\eta_\psi$. Theorem~\ref{thm:background} also gives
(4.8) and (4.7), with $\d_{\mathrm{ext}}=\b\tau$; this additional
conversion does not change the exponent of the intrinsic estimate.

\v
Smoothness of $\o_0,\o_\f$ near regular endpoints extends the intrinsic
estimate across the branch preimage by approximation. Choose finitely
many of the smaller projection neighborhoods to cover $W$, and let
$d_i$ be their degrees. Fix this cover using only $(W,\o_0)$ and put
$$
D=\max_i d_i,\qquad \g=\frac{\al}{2D}.
$$
Below a fixed scale $s_0\le1$, the local estimates therefore give
$d_{\o_\f}(x,y)\le C d_0(x,y)^\g$ whenever the first endpoint lies
in one of the smaller neighborhoods and the endpoints are sufficiently
close. The nested neighborhoods ensure that a sufficiently short
$\o_0$-almost-minimizing curve remains in the corresponding larger
neighborhood.

\v
By Theorem~\ref{thm:background}, the $d_0$-completion is the compact
space $W$ and has finite diameter. Subdivide a $d_0$-almost-minimizing
curve from $x$ to $y$ into at most $N_0$ sufficiently short subarcs,
where $N_0$ depends only on the fixed background data. If their lengths
are $s_j$, the local estimates and concavity give
$$
d_{\o_\f}(x,y)\le C\s_j s_j^\g
\le C N_0^{1-\g}\Bigl(\s_j s_j\Bigr)^\g
\le C N_0^{1-\g}\bigl(d_0(x,y)+\e\bigr)^\g.
$$
Letting $\e\ra0$ proves Theorem~\ref{thm:projective}.\hfill$\square$

\medskip
\noindent\textit{Proof of Corollary~\ref{cor:completion}.}
The upper distance estimate extends the identity on $W^{\reg}$ to a
continuous map from its $d_0$-completion $W$ to its $d_{\o_\f}$-completion
$\widehat W$. The image is compact and contains the dense regular locus,
so the map is surjective and $\widehat W$ is compact. If
$\o_\f\ge c\o_0$, comparison of lengths gives
$d_{\o_\f}\ge\sqrt c\,d_0$ on $W^{\reg}$. The inverse identity then
extends continuously in the other direction. The two maps are inverse
on the dense regular locus, hence everywhere. The two distance bounds
extend by continuity.\hfill$\square$

\subsection{Extension to normal K\"ahler spaces}
\label{subsec:normal-kahler}

The projective hypothesis in Theorem~\ref{thm:projective} supplies finite
projections and smooth local potentials for the background metric.
Both constructions are available locally on a normal K\"ahler space.
We give the details, keeping the distance and monodromy arguments above.

\begin{proposition}\label{prop:normal-kahler}
Let $(X,\t)$ be a compact connected normal K\"ahler space. Fix a finite
cover by local finite projection neighborhoods, as constructed below,
using only $(X,\t)$, and let their degrees be $d_1,\ldots,d_s$.
Put $D=\max_i d_i$. Suppose that $0<\al\le1$,
$$
\f\in\PSH(X,\t)\cap C^\al(X)\cap C^2(X^{\reg}),
\qquad \o=\t+\ddc\f,
$$
and that $\o$ is a smooth K\"ahler metric on $X^{\reg}$.
Here $C^\al$ has the ambient meaning specified in the introduction.
Then
$$
d_\o(x,y)\le C d_\t(x,y)^{\al/(2D)},\qquad x,y\in X^{\reg},
$$
where $C$ depends only on the fixed background data, $\al$, and
$\|\f\|_{C^\al(X)}$. The metric completion is compact. If
$\o\ge c\t$ for some $c>0$, it is canonically homeomorphic to $X$,
and the completed distances satisfy
$$
\sqrt c\,d_\t(x,y)\le d_\o(x,y)
\le C d_\t(x,y)^{\al/(2D)},\qquad x,y\in X.
$$
\end{proposition}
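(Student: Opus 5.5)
The plan is to reduce Proposition~\ref{prop:normal-kahler} to the local argument already given for Theorem~\ref{thm:projective}. That argument used projectivity in only two places: to produce finite projections, and to provide smooth local potentials for the background metric. So the work is to build both locally on $X$ and then repeat the proof verbatim.

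\textbf{Step 1: local charts.} For each $p\in X$, fix a local holomorphic embedding $\iota:U\hookrightarrow\C^N$ with $\iota(p)=0$. On $U$, after shrinking, $\t$ is the restriction of a smooth ambient Kähler form $\Theta=\ddc h$, where $h$ is smooth strictly psh on a ball in $\C^N$. Since $\f\in\PSH(X,\t)$, we have $h+\f=\Phi|_X$ locally with $\Phi$ psh.

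\textbf{Step 2: local finite projections.} Replace Noether normalization by the local parametrization theorem. $X$ is irreducible at $p$ by normality and has pure dimension $n$, so for a generic linear projection $\C^N\to\C^n$ the germ $(X,p)\to(\C^n,0)$ is finite. One can then choose polydiscs $Y_1\ssub Y_2\ssub Y\ssub Y_{\mathrm{out}}$ such that $\pi:\pi^{-1}(Y_{\mathrm{out}})\cap U\to Y_{\mathrm{out}}$ is proper and finite of some degree $d_p$. Its branch locus, together with $\pi(X^{\sing})$, lies in a proper analytic subset $B$, and $\pi^{-1}(\overline Y)$ is relatively compact in $U$.

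Each coordinate $w_\nu$ satisfies a Weierstrass polynomial $P_\nu(z,T)=\prod_j(T-w_\nu(x_j))$ with bounded holomorphic coefficients. The scaled form of Lemma~\ref{lem:root} (the Remark after it) therefore applies. Since $X$ is compact, finitely many such neighborhoods with images $\pi^{-1}(Y_1)$ cover $X$; this cover is fixed using only $(X,\t)$, which determines $D$.

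\textbf{Step 3: rerun the downstairs estimates.} Define $\psi=\pi_*\f$, $\psi_0=\pi_*h$, $F=\pi_*(h+\f)$ and $\eta_\psi=\ddc F$. Continuity of these functions follows from conservation of local degree, and $F$ is psh because it is a sum of psh functions along the inverse branches. Then:
\begin{itemize}
\item Lemma~\ref{lem:trace-holder} gives $F\in C^{\al/d_p}$, using the ambient $C^\al$ bound of $\f$ and of the smooth function $h$.
\item The averaging argument of Lemma~\ref{lem:base-distance} gives $d_{\eta_\psi}\le C|z-z'|^{\al/(2d_p)}$ on $Y_2\setminus B$.
\item Inequality (4.3) and the monodromy construction with the sets $S_k$ lift this unchanged to $d_\o(x,x')\le C\,d_\t(x,x')^{\al/(2d_p)}$ for nearby points of $\pi^{-1}(Y_1)\cap X^{\reg}$.
\end{itemize}
Theorem~\ref{thm:background} is stated for normal analytic spaces. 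It yields that the $d_\t$-completion is $X$, with finite diameter, and that short almost-minimizing curves stay inside the larger neighborhoods.

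\textbf{Step 4: globalize and identify the completion.} Globalize exactly as at the end of the proof of Theorem~\ref{thm:projective}: subdivide a $d_\t$-almost-minimizing curve into at most $N_0$ short arcs and use concavity of $s\mapsto s^{\al/(2D)}$. The compactness statement and the homeomorphism under $\o\ge c\t$ then follow word for word from the proof of Corollary~\ref{cor:completion}.

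\textbf{Main obstacle.} The hard step is Step 2, specifically making the local projections uniform. We need nested polydiscs on which $\pi$ is proper, with holomorphic coefficient bounds valid on a slightly larger polydisc, so that the Rouché and Cauchy estimates of Lemma~\ref{lem:root} hold with a fixed constant. We also need the monodromy construction to take place entirely inside one chart, which uses connectedness of $\pi^{-1}(Y)\cap X^{\reg}$; this comes from local irreducibility, i.e.\ normality. I would obtain all of this from the standard local parametrization of an irreducible germ, shrinking as necessary, and rescale so that the coefficient bound of Section~3 holds.
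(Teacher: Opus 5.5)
Your proposal is correct and follows essentially the same route as the paper. The local parametrization theorem replaces Noether normalization, the coordinate polynomials $P_\n$ (with coefficients extended across $B$ by the removable singularity theorem) feed Lemma~\ref{lem:root}, and the pushforward $F=\pi_*(h+\f)$ of the local ambient potential gives the $C^{\al/d}$ bound and the base estimate of Lemma~\ref{lem:base-distance}; the monodromy lift, the subdivision/concavity globalization and the completion argument are then rerun unchanged. As minor points, connectedness of $\pi^{-1}(Y)\cap X^{\reg}$ is not actually needed in the monodromy step, since fiber points in other components just give empty $S_k$. You should also say explicitly, as the paper does, that short almost-minimizing curves are perturbed off $\pi^{-1}(B)$, and that regular endpoints lying over $B$ are handled by approximation.
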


\begin{proof}
Fix $a\in X$ and a local holomorphic embedding near $a$ in $\C^M$.
Choose this embedding small enough that $\t=\ddc h$, where $h$ is the
restriction of a smooth strictly psh ambient potential. The local
parametrization theorem [19, Chapter II, Theorem 4.19] gives, after
shrinking the neighborhood, a finite proper holomorphic projection
$$
\pi:U\lra Y_{\mathrm{out}}\sub\C^n
$$
of degree $d$. Choose nested base balls
$Y_1\ssub Y_2\ssub Y\ssub Y_{\mathrm{out}}$ whose smallest inverse
image contains $a$. Properness makes the inverse images of the smaller
closed balls compact in $U$. Compactness of $X$ supplies finitely many
such projections whose inverse images of $Y_1$ cover $X$. This system
is fixed independently of $\f$.

\v
Let $B$ be a proper analytic subset of the base containing the branch
locus and the image of $U^{\sing}$. Over $Z=Y_{\mathrm{out}}\setminus B$,
the map is an unramified covering of degree $d$. For each remaining
ambient coordinate $w_\n$, form
$$
P_\n(z,T)=\p_{j=1}^d\bigl(T-w_\n(x_j)\bigr),
\qquad \pi^{-1}(z)=\{x_1,\ldots,x_d\},\quad z\in Z.
$$
Its coefficients are single-valued holomorphic functions on $Z$.
They are locally bounded, because the relevant inverse images are
relatively compact. The removable singularity theorem on the smooth
base extends the coefficients holomorphically across $B$; Cauchy
estimates control them on smaller base balls. Multiplicities at
exceptional fibers are local mapping degrees, whose sum is $d$.
This construction does not require flatness of $\pi$.

\v
Put $u=h+\f$. On the smaller neighborhood, $h$ is ambient Lipschitz,
and $u$ has an ambient $C^\al$ bound controlled by the stated data.
Define
$$
F(z)=\s_{x\in\pi^{-1}(z)}u(x),
$$
counting local mapping degrees as multiplicities. Conservation of
degree and continuity of $u$ give continuity of $F$. Off $B$, $F$ is
the sum of the psh functions $u\circ s_j$, where $s_j$ are local inverse
branches. Its continuous extension across $B$ is therefore psh.

\v
For nearby $z,z'\in Y\setminus B$, choose a short broken segment
avoiding $B$, as in Lemma~\ref{lem:trace-holder}. Applying
Lemma~\ref{lem:root} to the polynomials $P_\n$ pairs the endpoints of
its lifts so that
$$
|x'_j-x_j|_{\euc}\le C|z'-z|^{1/d}.
$$
The continuous-root version is relevant here, since coordinate
polynomials can have repeated roots even over an unramified fiber.
Consequently,
$$
|F(z')-F(z)|
\le \s_{j=1}^d|u(x'_j)-u(x_j)|
\le C|z'-z|^{\al/d}.
$$
Continuity extends the inequality across $B$, and bounded oscillation
handles pairs beyond the fixed local scale.

\v
On $Y\setminus B$ the form
$$
\eta=\ddc F=\s_{j=1}^d s_j^*\o
$$
is a smooth K\"ahler metric, with $\pi^*\eta\ge\o$. The averaging
argument of Lemma~\ref{lem:base-distance}, applied to the psh
$C^{\al/d}$ potential $F$, gives
$$
d_\eta(z,z')\le C|z-z'|^{\al/(2d)},
\qquad z,z'\in Y_2\setminus B,
$$
where the distance uses curves in $Y\setminus B$.

\v
A sufficiently short $\t$-almost-minimizing curve beginning over $Y_1$
stays over $Y_2$. Indeed, $\t$ is uniformly comparable to the restricted
ambient Euclidean metric on the compact set $\pi^{-1}(\overline{Y_2})$,
and leaving the larger neighborhood from the smaller one requires a
fixed positive length. Thus this assertion applies to curves approximating
the global distance $d_\t$. With regular endpoints outside $\pi^{-1}(B)$,
these curves can be slightly perturbed to avoid $\pi^{-1}(B)$, with fixed
endpoints and arbitrarily small increase of $\t$-length. Their projected
displacements are bounded by a constant times their $\t$-lengths.
The monodromy construction in the proof of Theorem~\ref{thm:projective}
therefore applies unchanged. It uses at most $2d-1$ short base curves
and gives
$$
d_\o(x,y)\le C d_\t(x,y)^{\al/(2d)}
$$
for sufficiently close endpoints in the smaller neighborhood.
Approximation extends this estimate across the branch preimage at
regular endpoints.

\v
Theorem~\ref{thm:background} and local comparison of $\t$ with the
ambient Euclidean metric identify the $d_\t$-completion with $X$.
It is compact and has finite diameter. Choose a uniform positive
scale on which the local estimates just proved apply, and decrease it
to at most one. Since $d_i\le D$, each local estimate then holds with
exponent $\al/(2D)$. The same subdivision and concavity argument used
at the end of the proof of Theorem~\ref{thm:projective} proves the global
inequality, with constants depending only on the stated data.

\v
Finally, the proof of Corollary~\ref{cor:completion} applies verbatim:
the upper estimate gives a continuous surjection from $X$ onto the
$d_\o$-completion, and the lower metric bound, when assumed, gives a
continuous inverse. Both distance inequalities extend to the completion.
\end{proof}

\subsection{The smoothable K\"ahler--Einstein application}

We recall the two analytic results used here. Under the polarized
projective $\Q{}$-Gorenstein smoothability convention specified after
Theorem 1, [2, Theorem 2] gives ambient H\"older continuity of the
potential of the singular K\"ahler--Einstein metric relative to a smooth
background form in its class. The strict positivity theorem
[6, Theorem 1.1] gives a lower bound by a positive multiple of a smooth
background K\"ahler metric. After a common rescaling, choose
$\t=\o_{\FS}|_X\in[\o_{\mathrm{KE}}]$. These results provide
$\al\in(0,1]$ and $c>0$ such that
$$
\o_{\mathrm{KE}}=\t+\ddc\f,\qquad
\f\in\PSH(X,\t)\cap C^\al(X),\qquad
\o_{\mathrm{KE}}\ge c\t.
$$
The potential and metric are smooth on $X^{\reg}$.

\medskip
\noindent\textit{Proof of Theorem 1.}
Apply Theorem~\ref{thm:projective} and Corollary~\ref{cor:completion}
to the preceding potential and metric. On the compact space $X$, the
smooth background metrics $\t$ and $\o_0$ are uniformly equivalent;
their intrinsic distances are therefore comparable. This gives the
two asserted inequalities and identifies the completed distance with
a distance on $X$.\hfill$\square$

\section{Applications}
\label{sec:applications}

\subsection{Uniform estimates for polarized families}

We first state the analytic estimate that will be used in the proof.
For fixed $n\ge1$ and $D>0$, consider polarized compact K\"ahler
manifolds $(X,\o,L,h)$ with
$$
\dim_{\C{}}X=n,\qquad \Ric(\o)\ge-\o,\qquad
\diam_\o X\le D,\qquad -\ddc\log h=\o.
$$
The analytic H\"older estimate and the projective embedding results
in [2, Theorem 1 and Section 2] give integers $k,N$ and constants
$A>0$, $\al\in(0,1]$, depending only on $n,D$, with the following
properties. For the inner product defined by $h^k$ and $(k\o)^n$,
an orthonormal basis of $H^0(X,L^k)$ defines an embedding
$T_k:X\hra\P^{N_X}$ with $N_X\le N$. If
$$
\t_X=k^{-1}T_k^*\o_{\FS},
$$
there is a normalized potential $\f_X$, with $\sup_X\f_X=0$, such that
$$
\o=\t_X+\ddc\f_X,\qquad
|\f_X(x)-\f_X(y)|
\le A\,d_{(\P^{N_X},k^{-1}\o_{\FS})}(T_kx,T_ky)^\al.
$$
The distance on the right is the ambient projective distance; this
distinction is important when applying the finite projection estimate.
Since that ambient space has uniformly bounded diameter, the same
estimate and normalization give a uniform bound on $\|\f_X\|_{L^\i}$.
Here and below $D$ in this subsection denotes the diameter bound.

\v
We combine this analytic input with the geometric argument of Section 4.

\begin{theorem}[Uniform projective comparison]\label{thm:uniform}
For every $n\geq1$ and $D>0$, there are integers $k,N$ and constants
$C>0$, $\d\in(0,1]$, depending only on $n,D$, with the following
property. Let $(X,\o,L,h)$ be a polarized compact K\"ahler manifold with
$$
\dim_{\C{}}X=n,\qquad
\Ric{}(\o)\geq-\o,\qquad
\diam_\o X\leq D,\qquad -\ddc\log h=\o.
$$
The Kodaira map $T_k:X\lra\P^{N_X}$ defined by an
$L^2$-orthonormal basis of $H^0(X,L^k)$, using $h^k$ and $(k\o)^n$,
is an embedding with $N_X\leq N$. For $\t_X=k^{-1}T_k^*\o_{\FS}$,
\be\tag{5.1}
C^{-1}d_{\t_X}(p,q)\leq d_\o(p,q)
\leq C d_{\t_X}(p,q)^\d,\qquad p,q\in X.
\ee
\end{theorem}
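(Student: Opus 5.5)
The upper bound in (5.1) will come from running the finite-projection argument of Section~\ref{sec:distance} uniformly on the embedded images $W_X:=T_k(X)\sub\P^{N_X}\sub\P^N$. The lower bound will come from a uniform lower bound $\o\ge c\,\t_X$. The analytic input recalled above gives $k,N,A,\al$ depending only on $n,D$, together with a normalized potential satisfying $\o=\t_X+\ddc\f_X$ and an ambient H\"older bound $|\f_X(x)-\f_X(y)|\le A\,d_{\P^{N}}(T_kx,T_ky)^\al$. Since $T_k$ is an embedding, $(X,\t_X)$ is isometric to $(W_X,k^{-1}\o_{\FS}|_{W_X})$, and $d_{\t_X}$ is the intrinsic distance there. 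Theorem~\ref{thm:projective} applied to each $W_X$ would already give the upper bound, but with $C$ and $D$ (the projection degree) depending on $X$. The real task is therefore uniformity over the family.

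\textbf{Step 1: a uniform lower bound.} I would prove $\o\ge c\,\t_X$ with $c=c(n,D)$. This amounts to a uniform gradient bound for the Kodaira map, $|\nabla s|_{h^k,\o}\le C$ for $L^2$-orthonormal sections. It follows from the Bochner formula for $|s|^2$ and $|\nabla s|^2$ under $\Ric\ge-\o$, combined with Moser iteration using the uniform Sobolev constant. That constant is available because volume is fixed by the polarization, the diameter is bounded, and Croke/Li-type bounds apply. The bound gives $T_k^*\o_{\FS}\le Ck\o$, hence $\t_X\le C\o$ and $d_{\t_X}\le C d_\o$.

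\textbf{Step 2: uniform projection data.} The images $W_X$ have uniformly bounded degree $k^n L^n\le C(n,D)$, because $L^n=\int\o^n$ is bounded by Bishop--Gromov. They therefore lie in finitely many components of a Chow variety of $\P^N$. By compactness of the Chow variety I would choose finitely many linear projections $\pi:\C^N\ra\C^n$ and nested balls $Y_1\ssub Y_2\ssub Y$ with the following properties, uniformly for all nearby cycles:
\begin{enumerate}
\item the projection is finite, of degree at most $D_0=D_0(n,D)$;
\item the coefficients of the coordinate polynomials $P_\n$ are uniformly bounded;
\item the sets $\pi^{-1}(Y_1)$ cover $W_X$.
\end{enumerate}
The uniform coefficient bound is what lets the scaled form of Lemma~\ref{lem:root} give constants depending only on $(n,D)$. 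Lemma~\ref{lem:trace-holder} then yields $F\in C^{\al/D_0}$ with uniform bounds. The averaging of Lemma~\ref{lem:base-distance} gives $d_\eta\le C|z-z'|^{\al/(2D_0)}$ on the base. The monodromy construction then gives the local estimate $d_\o\le C\,d_{\t_X}^{\al/(2D_0)}$ below a uniform scale.

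\textbf{Step 3: globalization.} Subdivide a $d_{\t_X}$-almost-minimizing curve into a bounded number $N_0$ of short arcs and apply concavity exactly as at the end of the proof of Theorem~\ref{thm:projective}. To bound $N_0$ I need a uniform bound on $\diam_{\t_X}X$, which follows from Step 1: $\diam_{\t_X}\le C\diam_\o\le CD$. The result is (5.1) with $\d=\al/(2D_0)$.

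\textbf{Main obstacle.} I expect Step 2 to be the hardest part, specifically the uniform choice of projection neighborhoods. A single projection may fail to be finite for limiting cycles in the Chow variety. The scale separating the inner and outer balls must also be uniform, so that short $\t_X$-curves starting over $Y_1$ stay over $Y_2$. My first approach would be a contradiction argument: take a sequence $W_{X_i}$ converging in the Chow topology to a cycle $W_\infty$, and use projections that are generic for $W_\infty$. These remain finite, with bounded coefficients, for all $W_{X_i}$ with $i$ large, by upper semicontinuity of fiber dimension. Normality of $W_\infty$ is not needed, because the monodromy argument only uses the covering over the complement of the branch locus and the root estimate.
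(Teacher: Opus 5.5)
Your Steps~2 and~3 follow the paper's route closely. Both arguments use the bounded degree of $T_k(X)$ and compactness of the Chow space, with projections whose centers miss the limiting supports and therefore stay a uniform distance from all nearby supports. This gives uniform coefficient bounds, so Lemma~\ref{lem:root} applies with constants depending only on $n,D$. The averaging and monodromy arguments of Section~\ref{sec:distance} then give exponent $\alpha/(2E)$. The uniform-scale issue you flag as the main obstacle is settled by the same separation. Each projection is uniformly Lipschitz on the relevant region, and the $\theta_X$-length of a curve equals its ambient $k^{-1}\omega_{\mathrm{FS}}$-length. Hence short $\theta_X$-curves starting over $Y_1$ stay over $Y_2$ at a scale depending only on $n,D$.

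The genuine gap is in Step~1. A uniform bound on $|\nabla s|_{h^k,\omega}$ for $L^2$-orthonormal sections does not by itself give $T_k^*\omega_{\mathrm{FS}}\le Ck\omega$. Let $\rho_k=\sum_i|s_i|^2_{h^k}$ be the Bergman density. Then $T_k^*\omega_{\mathrm{FS}}=k\omega+\sqrt{-1}\partial\bar\partial\log\rho_k$, and computing in a normal frame gives
\begin{equation*}
T_k^*\omega_{\mathrm{FS}}\le \rho_k^{-1}\,\sqrt{-1}\sum_i\nabla s_i\wedge\overline{\nabla s_i}.
\end{equation*}
So the gradient bound controls $\theta_X$ only after dividing by $\rho_k$, and you need a uniform positive lower bound on $\rho_k$. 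That is the partial $C^0$ estimate, which the paper invokes together with the Bochner--Moser bounds. It does not follow from Bochner formulas and Moser iteration, which give only upper bounds for $|s|$ and $|\nabla s|$. It rests on the structure theory of Gromov--Hausdorff limits under the Ricci lower bound.

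Without this estimate your argument yields neither the lower bound in (5.1) nor the uniform bound on $\mathrm{diam}_{\theta_X}X$ that you use in Step~3. The dependence in Step~3 can be removed as the paper does. If $d_{\theta_X}(p,q)\ge s_0$, the uniform local scale, then $d_\omega(p,q)\le D\le D s_0^{-\delta}d_{\theta_X}(p,q)^\delta$. So far-apart pairs need no subdivision and no $\theta_X$-diameter bound. The lower bound in (5.1), however, genuinely requires the partial $C^0$ estimate.
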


\begin{proof}
All constants below depend only on $n,D$. Integrality of $c_1(L)$
gives a positive lower bound for volume, and Bishop comparison with
the Ricci and diameter bounds gives an upper bound. Choose $k,N,A,\al$
as in the analytic input above. The partial $C^0$ estimate bounds the
Bergman density below, while the Bochner--Moser estimates bound the
sections and their first derivatives. In the formula for
$k^{-1}T_k^*\o_{\FS}$, these estimates bound its coefficients relative
to $\o$, giving $\t_X\le C\o$. The analytic input gives
\be\tag{5.2}
\o=\t_X+\ddc\f_X,\qquad
|\f_X(x)-\f_X(y)|\leq
A\,d_{(\P^{N_X},k^{-1}\o_{\FS})}(T_kx,T_ky)^\al.
\ee

\v
The degree of $T_k(X)$ is bounded by an integer $E=E(n,D)$.
View these images in $\P^N$. For each cycle in the compact union
of the Chow spaces of $n$-cycles of degrees at most $E$, choose a
projective linear projection to $\P^n$ whose center misses its
support. This projection is finite on the support, and the center
remains a positive distance from all nearby supports. A finite cover
therefore supplies finitely many projections with a uniform separation
from their centers. Choose finitely many affine base charts with nested
balls $Y_1\ssub Y_2\ssub Y\ssub Y_{\mathrm{out}}$ whose smallest
balls cover $\P^n$.

\v
On the inverse images of these balls the affine coordinates are
uniformly bounded, and the projection degree satisfies $d\leq E$.
The coordinate characteristic polynomials consequently have uniformly
bounded coefficients, and Cauchy estimates bound their derivatives on
$Y$. Thus Lemma 1 applies uniformly. This argument uses only the
supports of the limiting cycles; repeated roots are allowed.

\v
Let $h_0$ be the local Fubini--Study potential for $\t_X$.
It has a uniform ambient Lipschitz bound on these charts. Applying
Lemma 1 directly to the ambient estimate (5.2) shows that
$F=\pi_*(h_0+\f_X)$ has a uniform $C^{\al/d}$ bound, after
subtracting a constant. The averaging argument of Section 4 gives
$$
d_{\pi_*\o}(z,z')\leq C|z-z'|^{\al/(2d)},
\qquad z,z'\in Y_2\setminus B.
$$
A sufficiently short $\t_X$-approximate geodesic beginning over
$Y_1$ projects into $Y_2$. The monodromy construction of Section 4,
with at most $2d-1$ short paths, therefore yields
$$
d_\o(x,y)\leq C d_{\t_X}(x,y)^{\al/(2E)}
$$
below a uniform scale. Smoothness extends this estimate across the
branch preimage. For pairs separated by more than that scale,
$d_\o\leq D$ gives the same estimate after increasing $C$.
Finally $\t_X\leq C\o$ gives the lower bound in (5.1).
\end{proof}

\subsection{K\"ahler--RCD spaces}
Let $X$ be a compact connected normal K\"ahler variety of dimension $n$ with klt
singularities, $\t$ a smooth K\"ahler metric, and $\O$ an adapted volume
measure. Locally, $\O$ is a smooth positive multiple of
$(\eta\wedge\bar\eta)^{1/m}$, with the usual positivity convention, where $\eta$
generates $mK_X$. For $p>1$, write $\cK_p(X,\t,\O)$ for the
currents $\o=\t+\ddc\f\ge0$ with bounded potential, smooth and
K\"ahler on $X^{\reg}$, and $\o^n/\O\in L^p(X,\O)$.

\v
This final application uses the following analytic input from the
companion work [7], which is in preparation. Suppose
$\o\in\cK_p(X,\t,\O)$ and its metric completion, with the canonical
extension of $\o^n$, is noncollapsed $\operatorname{RCD}(-\l,2n)$.
For the normalized potential $\sup_X\f=0$, the input asserts that
there are $A>0$ and $a\in(0,1]$ such that
$$
|\f(x)-\f(y)|\le A d_\t(x,y)^a,
\qquad x,y\in X^{\reg}.
$$
This intrinsic H\"older estimate extends $\f$ continuously to the
background completion $X$. We use precisely this conclusion of [7].
For fixed $(X,\t)$, the constants in the geometric conclusion below
are controlled by $A,a$ and the fixed background data. The result
is conditional on this analytic input; no preceding proof uses it.

\begin{theorem}\label{thm:rcd}
Let $\o\in\cK_p(X,\t,\O)$, and let
$(\widehat X,d,\m)$ be the metric completion of $(X^{\reg},d_\o)$,
with the canonical extension of $\o^n$. Suppose this is a noncollapsed
$\operatorname{RCD}(-\l,2n)$ space. Subject to the analytic input [7],
there exist $C>0$ and $\d\in(0,1]$ such that
\be\tag{5.3}
d_\o(x,y)\le C d_\t(x,y)^\d,
\qquad x,y\in X^{\reg}.
\ee
For fixed $(X,\t)$, these constants depend on the exponent $a$ and
bound $A$ supplied by the analytic input. The identity extends to a continuous surjection
$X\ra\widehat X$, and $\widehat X$ is compact. If $\o\ge c_0\t$
for some $c_0>0$, this map is a homeomorphism and
$$
c_0^{1/2}d_\t(x,y)\le d(x,y)\le C d_\t(x,y)^\d,
\qquad x,y\in X.
$$
\end{theorem}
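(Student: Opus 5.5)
The plan is to reduce Theorem~\ref{thm:rcd} to Proposition~\ref{prop:normal-kahler}. The one gap is that the analytic input from [7] gives an \emph{intrinsic} H\"older bound $|\f(x)-\f(y)|\le A\,d_\t(x,y)^a$, whereas Proposition~\ref{prop:normal-kahler} asks for $\f\in C^{\al}(X)$ in the \emph{ambient} sense. So the first step is to convert intrinsic H\"older continuity into ambient H\"older continuity with a smaller exponent. By Theorem~\ref{thm:background}, applied on a fixed finite cover of $X$ by embedded neighborhoods $U'\ssub U$ with $U\cap X^{\reg}$ connected, we have $d_\t(x,y)\le C|\iota(x)-\iota(y)|^\b$ for $x,y\in U'\cap X^{\reg}$. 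Here the local intrinsic distance dominates the global $d_\t$, and the comparability of $\t$ with the Euclidean metric is handled in the remark after Theorem~\ref{thm:background}. Hence $|\f(x)-\f(y)|\le AC^a|\iota(x)-\iota(y)|^{a\b}$ on $U'\cap X^{\reg}$. Since $X^{\reg}$ is dense and the input already extends $\f$ continuously to $X$, this estimate extends to all of $U'$. Therefore $\f\in C^{a\b}(X)$ in the ambient sense, with norm controlled by $A$, $a$ and the fixed background data; the sup normalization together with the bounded diameter of the $d_\t$-completion bounds $\|\f\|_{L^\i}$.

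Next I would verify the remaining hypotheses of Proposition~\ref{prop:normal-kahler}. We have $\f\in\PSH(X,\t)$ by the definition of $\cK_p$, and $\o$ is smooth and K\"ahler on $X^{\reg}$, so $\f$ is smooth there because $\t$ is. The space $X$ is compact, connected and normal (klt implies normal). Applying the proposition with $\al:=a\b$ and the background degree bound $D$ gives (5.3) with $\d=a\b/(2D)\in(0,1]$, where $C$ depends only on $A$, $a$ and $(X,\t)$. The RCD hypothesis and the $L^p$ condition enter only through the analytic input. The parts of the proposition that do not use a lower bound then show that the identity on $X^{\reg}$ extends to a continuous surjection from the $d_\t$-completion, which is $X$, onto $\widehat X$, and that $\widehat X$ is compact.

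When $\o\ge c_0\t$, lengths satisfy $\ell_\o\ge c_0^{1/2}\ell_\t$, so $d\ge c_0^{1/2}d_\t$ on $X^{\reg}$. The argument of Corollary~\ref{cor:completion} then produces a continuous inverse, and the two maps agree on the dense regular locus, so they are inverse homeomorphisms. Both inequalities then extend to $X$ by continuity.

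The main obstacle is the first step. Theorem~\ref{thm:background} is only local and gives a local intrinsic distance, so I must be careful about the direction of each inequality: the global $d_\t$ is at most the local one, which is exactly the direction needed. I must also make sure that the ambient $C^{a\b}$ seminorm is uniform over the finitely many charts. Everything after that is a direct citation of Proposition~\ref{prop:normal-kahler}.
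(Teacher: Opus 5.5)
Your proposal is correct and follows the paper's proof essentially step for step. You convert the intrinsic H\"older bound from [7] into an ambient $C^{a\b}$ bound via Theorem~\ref{thm:background}, with the correct direction (global $d_\t\le$ local distance) and a uniform $\b$ over the finite cover. You bound $\|\f\|_{L^\i}$ by normalization and finite diameter, then invoke Proposition~\ref{prop:normal-kahler} with $\d=a\b/(2D)$ to get (5.3), compactness, the surjection, and the homeomorphism under $\o\ge c_0\t$. The one compressed step is smoothness of $\f$ on $X^{\reg}$: this does not follow from smoothness of $\t$ alone, but from smoothness of $\ddc\f=\o-\t$ together with local elliptic regularity, as the paper states.
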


\begin{proof}
Choose a fixed finite system of embedded neighborhoods for $X$.
By Theorem~\ref{thm:background}, after shrinking these neighborhoods,
there are $C_0>0$ and $\b\in(0,1]$ such that
$$
d_\t(x,y)\le C_0|x-y|^\b
$$
for nearby regular points in each smaller chart. Choose $\b$ to be
the minimum of the finitely many local exponents. The analytic input
therefore gives
$$
|\f(x)-\f(y)|\le A C_0^a |x-y|^{a\b}.
$$
Continuity extends the estimate to all points in the smaller charts.
Normalization and the finite background diameter also bound
$\|\f\|_{L^\i}$ in terms of $A,a$ and $(X,\t)$. Thus $\f$ has an
ambient $C^{a\b}$ bound with the stated dependence.

\v
On $X^{\reg}$, the current $\ddc\f=\o-\t$ is smooth;
local elliptic regularity therefore gives $\f\in C^\i(X^{\reg})$.
Fix the local projections of Proposition~\ref{prop:normal-kahler},
and denote their largest degree by $D_0$. That proposition gives (5.3)
with the explicit choice
$$
\d=\frac{a\b}{2D_0}.
$$
It also gives compactness, the continuous surjection from $X$ onto
$\widehat X$, and the claimed homeomorphism and lower estimate when
$\o\ge c_0\t$. If the analytic input is stated directly in the ambient
$C^\al$ sense, the same argument uses $\d=\al/(2D_0)$ without the
intrinsic--extrinsic conversion.
\end{proof}

\end{document}